\newtheorem{theorem}{Theorem}
\newtheorem{proposition}[theorem]{Proposition}%
\theoremstyle{thmstyletwo}%
\newtheorem{remark}{Remark}%
\theoremstyle{thmstylethree}%
\newtheorem{definition}{Definition}%
\def\eps{\epsilon}
\renewcommand{\Re}{\mathbb{R}}
\crefname{assumption}{Assumption}{Assumptions}
\begin{document}

\title[A globally convergent gradient method with momentum]{A globally convergent gradient method with momentum}


\author*[1]{\fnm{Matteo} \sur{Lapucci}}\email{matteo.lapucci@unifi.it}

\author[2]{\fnm{Giampaolo} \sur{Liuzzi}}\email{liuzzi@diag.uniroma1.it}

\author[2]{\fnm{Stefano} \sur{Lucidi}}\email{lucidi@diag.uniroma1.it}

\author[1]{\fnm{Davide} \sur{Pucci}}\email{davide.pucci@unifi.it}

\author[2]{\fnm{Marco} \sur{Sciandrone}}\email{sciandrone@diag.uniroma1.it}

\affil*[1]{\orgdiv{Dipartimento di Ingegneria dell'Informazione Firenze}, \orgname{Università di Firenze}, \orgaddress{\street{Via di Santa Marta 3}, \city{Firenze}, \postcode{50135}, \state{FI}, \country{Italy}}}

\affil[2]{\orgdiv{Dipartimento di Ingegneria Informatica, Automatica e Gestionale ``Antonio Ruberti''}, \orgname{Sapienza Università di Roma}, \orgaddress{\street{Via Ariosto 25}, \city{Roma}, \postcode{00185}, \state{RM}, \country{Italy}}}


\abstract{In this work, we consider smooth unconstrained optimization problems and we deal with the class of gradient methods with momentum, i.e., descent algorithms where the search direction is defined as a linear combination of the current gradient and the preceding search direction. This family of algorithms includes nonlinear conjugate gradient methods and Polyak's heavy-ball approach, and is thus of high practical and theoretical interest in large-scale nonlinear optimization. 
We propose a general framework where the scalars of the linear combination defining the search direction are computed simultaneously by minimizing the approximate quadratic model in the 2 dimensional subspace.
This strategy allows us to define a class of gradient methods with momentum enjoying global convergence guarantees and an optimal worst-case complexity bound in the nonconvex setting. Differently than all related works in the literature, the convergence conditions are stated in terms of the Hessian matrix of the bi-dimensional quadratic model. To the best of our knowledge, these results are novel to the literature. 
Moreover, extensive computational experiments show that the gradient method with momentum here presented is competitive with respect to other popular solvers for nonconvex unconstrained problems.
}
\keywords{Nonconvex optimization, momentum, global convergence, complexity bound}


\pacs[MSC Classification]{90C26, 90C30, 90C60}

\maketitle

\section{Introduction}
In this work we consider unconstrained optimization problems
\begin{equation*}\label{mainprob}
	\min_{x\in\mathbb{R}^n} f(x),
\end{equation*}
where $f:\Re^n\to\Re$ is a continuously differentiable objective function. We do not assume that the function is convex.
We focus on first order descent methods that exploit information from the preceding iteration to determine the search direction and the stepsize at the current one. We will hence refer to \textit{gradient methods with momentum}, i.e., to algorithms defined by an iteration of the generic form
\begin{equation}\label{framework}
x_{k+1} = x_k - \alpha_k\nabla f(x_k) + \beta_k(x_k-x_{k-1}),
\end{equation}
where  $\alpha_k>0$ is the stepsize, and $\beta_k>0$ is the momentum weight.
Partially repeating the previous step has the effect of controlling oscillation and providing acceleration in low curvature regions. All of this can, in principle, be achieved only exploiting already available information: no additional function evaluations are required to be carried out. This feature makes the addition of momentum terms appealing in large-scale settings and, in particular, in the deep learning context \cite{bottou2018optimization,wright2022optimization}.

The best-known and most important gradient methods
with momentum arguably are:
\begin{itemize}
\item[-] Polyak's heavy-ball method \cite{polyak1964some,polyak1987introduction};
\item [-] conjugate gradient methods (see, e.g., \cite{grippo2023introduction}).
\end{itemize}

Conjugate gradient methods can be described by the updates
\begin{gather*}
	x_{k+1} = x_k+\alpha_k d_k,\qquad
	d_{k+1} = -\nabla f(x_{k+1}) + \beta_{k+1}d_k,
\end{gather*}
where $\alpha_k$ is computed by means of a line search,
whereas $\beta_k$ is obtained according to one of many rules from the literature (see, e.g., \cite{grippo2023introduction}).
The update of conjugate gradient methods can be rewritten as 
\begin{align*}
	x_{k+1}&=  x_k+\alpha_k(-\nabla f(x_k)+\beta_kd_{k-1})\\&=x_k+\alpha_k\left(-\nabla f(x_k)+\frac{\beta_k}{\alpha_{k-1}}(x_k-x_{k-1})\right)\\&=
	x_k - \alpha_k \nabla f(x_k) + \hat{\beta}_k(x_k-x_{k-1}),
\end{align*}
so that it can be viewed as a gradient method with momentum according to definition \eqref{framework}.

The convergence theory of nonlinear conjugate gradient methods has been a research topic for about 30 years and now it can be considered well-established, while some complexity results have been stated only recently \cite{chan2022nonlinear,neumaier2024globally}. Several proposed conjugate gradient methods can be considered sound and efficient tools for unconstrained optimization.
Recently, a new class of conjugate gradient methods, known as subspace minimization conjugate gradient (SMCG) methods, have been proposed in the literature \cite{liu2024new,sun2021class,yang2017subspace,yuan1995subspace}. We will discuss more in detail this class of approaches later in this work, since they are related to the framework proposed here.

On the other hand, the heavy-ball algorithm is directly described by an iteration of the form \eqref{framework},
where $\alpha_k$ and $\beta_k$ typically are fixed positive values\cite{sra2012optimization}; in principle, suitable constants should be chosen depending on properties of the objective function (e.g., using Lipschitz constant of the gradient or the constant of strong convexity) \cite{ghadimi2015global,lessard2016analysis,polyak1987introduction}. In practice, however, this information is often not accessible and thus $\alpha_k$ can be chosen by a line search while the momentum parameter $\beta_k$ is usually blindly set to some (more or less) reasonable value. 
Convergence results for the heavy-ball method  have been proven in the convex case \cite{ghadimi2015global,polyak1964some,saab2022adaptive}, while
the convergence of the method  in the nonconvex case is still an open problem.

Thus, algorithmic issues related to the choice of the two
parameters $\alpha_k$ and $\beta_k$ and the theoretical gap related
to the convergence in the nonconvex case did not allow, until now, to include the heavy-ball method within the class of sound methods for smooth unconstrained optimization. As a matter of fact,
there does not exist any popular software implementation of the method. 
However, heavy-ball type momentum terms are consistently and effectively used within modern frameworks of stochastic optimization for neural network training \cite{liu2020improved,sutskever2013importance}.

Both conjugate gradient and heavy-ball methods are therefore of practical interest in large-scale nonlinear optimization settings. We hence believe it is worth to focus on the study of the general class of gradient methods with momentum, in order to possibly define convergent algorithms improving the efficiency of standard nonlinear conjugate gradient methods.

To this aim, we draw inspiration from the idea presented in \cite{yuan1995subspace}, where 
the search direction is computed by minimizing the approximate quadratic model in the 2 dimensional
subspace spanned by the current gradient and the last search direction.
According to this approach,
the scalars $\alpha_k$ and $\beta_k$ are not prefixed, but rather they are simultaneously determined by a bidimensional search.
We define a general framework of gradient methods with momentum
based on a simple Armijo-type line search, and we prove global convergence results under the first-order smoothness assumptions only. 
We also derive specific algorithms with momentum from the general framework.
Furthermore, we provide complexity results, proving for the proposed gradient method with momentum the worst-case complexity bound of  ${\cal O}(\epsilon^{-2})$, which is optimal for first order algorithms in the nonconvex setting \cite{carmon2020lower}.

Up to our knowledge, the theoretical results presented here are novel in the literature. Known convergence and complexity properties for similar algorithmic frameworks are indeed based on assumptions that are stronger than ours or, as an alternative, can only be enforced in practice resorting to restarting techniques. Our analysis here does not make hypotheses concerning second order information for the objective nor linear independence between gradient and momentum directions; in addition, the conditions resulting from our analysis can be rigorously handled directly in the subspace, which is a crucial aspect from the computational perspective.

Extensive numerical experiments show that the gradient methods with momentum here presented outperform popular implementations of the conjugate gradient method and a widespread limited-memory Quasi-Newton method like
L-BFGS algorithm \cite{liu1989limited}. Moreover, while the state-of-the-art conjugate gradient based procedure implemented in \texttt{cg\_descent} software \cite{hager2006algorithm,hager2013limited} proves to be clearly superior, the proposed method may  represent an interesting baseline for devising a solver competitive with the state of the art.

The rest of the paper is organized as follows: we describe the main idea of the work in \cref{sec:main_idea}, with a focus on some important related works in \cref{sec:related}; in \cref{sec:existence} we discuss conditions for the proposed method to be well defined. Then, we discuss in \cref{sec:grad_rel_dir} the issue of guaranteeing that the employed search directions are gradient related; in particular, we consider the cases where the property descends from assumptions on either $n\times n$ or $2\times 2$ matrices (\cref{sec:nn} and \cref{sec:22} respectively). In \cref{sec:alg}, we finally describe the proposed algorithmic framework for gradient methods with momentum, formalizing convergence and complexity results. In \cref{sec:comp_H}, we then discuss concrete strategies to estimate the matrices that are at the core of the proposed method. In \cref{sec:exp}, we report the results of thorough computational experiments empirically showing the potential of the proposed class of approaches. We finally give some concluding remarks in \cref{sec:conclusions}.

\section{The main idea}
\label{sec:main_idea}
A vast class of iterative algorithms for nonlinear unconstrained optimization (namely linesearch algorithms) can be written in a general form as
$$x_{k+1} = x_k+\alpha_kd_k,$$
where $d_k\in \Re^n$ is the search direction and $\alpha_k >0$ is the stepsize. The most typical rules for the choice of the direction follow a general scheme, which is basically given by the following optimization subproblem:
\begin{equation}
	\label{eq:gen_dir_prob}
	\min_{d\in\mathbb{R}^n}\;\nabla f(x_k)^T  d + \frac{1}{2}d^T  B_kd,
\end{equation}
where $B_k$ is a suitable symmetric positive definite matrix.
By properly choosing $B_k$, we retrieve standard methods, in particular:
\begin{itemize}
	\item if $B_k = I$, then we obtain the steepest descent direction $-\nabla f(x_k)$;
	\item if $B_k = \nabla^2 f(x_k)$, then we obtain Newton's direction $-\left[\nabla^2 f(x_k)\right]^{-1}\nabla f(x_k)$;
	\item if $B_k$ is a positive definite matrix obtained with suitable update rules, we obtain standard Quasi-Newton updates.
\end{itemize}
Gradient methods with momentum can be considered in the above framework by adding into \eqref{eq:gen_dir_prob} a suitable constraint on $d$, i.e., 
\begin{equation}
	\label{eq:constrained_dir_prob}
	\begin{aligned}
		\min_{d,\alpha,\beta}\;&  g_k^T  d + \frac{1}{2}d^T  B_kd\\\text{s.t. }& d = -\alpha g_k + \beta s_k,
	\end{aligned}
\end{equation}
where $g_k=\nabla f(x_k)$ and $s_k=x_k-x_{k-1}$.
Then, substituting the constraint into the objective function of \eqref{eq:constrained_dir_prob}, the problem reduces to
\begin{equation}\label{phialfabeta}
\min_{\alpha, \beta} \phi(\alpha,\beta)
\end{equation}
where
\begin{gather*}
	\phi(\alpha, \beta)=-\alpha\| g_k\|^2+\beta g_k^T s_k+\frac{1}{2}\alpha^2 
	g_k^TB_kg_k+ \frac{1}{2}\beta^2 
	s_k^TB_ks_k-\alpha \beta g_k^TB_ks_k,
\end{gather*}
or equivalently
\begin{equation}\label{phialfabeta2}
\phi(\alpha,\beta) = \begin{bmatrix}
	-\| g_k\|^2\\
	g_k^T s_k
\end{bmatrix}^T\begin{bmatrix}
	\alpha\\
	\beta
\end{bmatrix} + \frac{1}{2}\begin{bmatrix}
	\alpha\\
	\beta
\end{bmatrix}^T \begin{bmatrix}
	g_k^TB_kg_k& -g_k^TB_ks_k\\ 
	-g_k^TB_ks_k& s_k^TB_ks_k
\end{bmatrix}\begin{bmatrix}
	\alpha\\
	\beta
\end{bmatrix}. 
\end{equation}
We can denote the $2\times 2$ matrix in the above equation as
\begin{equation}
	\label{eq:Hk_explicit}
	H_k = \begin{bmatrix}
		H_{11} & H_{12}  \\
		H_{12} & H_{22} 
	\end{bmatrix}=\begin{bmatrix}
		g_k^TB_kg_k& -g_k^TB_ks_k\\ 
		-g_k^TB_ks_k& s_k^TB_ks_k
	\end{bmatrix} = P_k^TB_kP_k,
\end{equation}
where we omit the dependence of $H_{ij}$ on $k$ for the sake of notation simplicity and we set $P_k = \left[-g_k\;\; s_k\right]$. Letting $u=[\alpha\;\;\beta]^T$, we can express the problem as
\begin{equation}
	\label{eq:constrained_dir_prob2}
	\min_{u}\;  \frac{1}{2}u^TP_k^T B_kP_ku + g_k^T  P_ku,
\end{equation}
or equivalently
as
\begin{equation}
	\label{eq:constrained_dir_prob2b}
	\min_{u}\;  \frac{1}{2}u^TH_ku + g_k^T  P_ku. 
\end{equation}
Once a solution $u_k=[\alpha_k\;\;\beta_k]^T$ of \eqref{eq:constrained_dir_prob2} is determined, we define
$$
d_k=-\alpha_kg_k+\beta_ks_k
$$
and, provided that $d_k$ is a descent direction, we set
\begin{equation}\label{update}
x_{k+1}=x_k+\eta_kd_k,
\end{equation}
where $\eta_k$ can be determined, for example, by an Armjio-type line search.
\par\medskip\noindent
Now consider
the two-dimensional function  
$$
\psi_k(\alpha,\beta)=f(x_k-\alpha g_k+\beta s_k)
$$
and assume that $f$ is twice continuously differentiable.
We have
\begin{gather*}
	\psi_k(0,0)=f(x_k),
	\qquad
	\nabla \psi_k(0,0)=
	\begin{bmatrix}
		-\|g_k\|^2\\ 
		g_k^Ts_k
	\end{bmatrix},
	\\
	\nabla^2\psi_k(0,0)=
	\begin{bmatrix}
		g_k^T\nabla^2f(x_k)g_k& -g_k^T\nabla^2f(x_k)s_k\\ 
		-g_k^T\nabla^2f(x_k)s_k& s_k^T\nabla^2f(x_k)s_k
	\end{bmatrix}.
\end{gather*}
The quadratic Taylor polynomial for $\psi_k(\alpha,\beta)$ centered at $(0,0)$ is thus
\begin{equation}\label{QuadModel}
\psi_k(\alpha,\beta) = f(x_k)+\begin{bmatrix}
		-\|g_k\|^2\\ 
		g_k^Ts_k
	\end{bmatrix}^T \begin{bmatrix}
			\alpha\\ 
			\beta
		\end{bmatrix}
		+\frac{1}{2}
		\begin{bmatrix}
			\alpha\\ 
			\beta
		\end{bmatrix}^T
		\begin{bmatrix}
	g_k^T\nabla^2f(x_k)g_k& -g_k^T\nabla^2f(x_k)s_k\\ 
	-g_k^T\nabla^2f(x_k)s_k& s_k^T\nabla^2f(x_k)s_k
\end{bmatrix}
\begin{bmatrix}
			\alpha\\ 
			\beta
		\end{bmatrix}.
\end{equation}

The matrix $H_k$ defined by \eqref{eq:Hk_explicit} can then be
viewed as a matrix approximating $\nabla^2\psi_k(0,0)$.
From this point of view,
$H_k$ could in fact be any $2\times 2$ matrix (independent of the $n\times n$ matrix $B_k$) related to the approximation of the quadratic Taylor polynomial of $\psi_k(\alpha,\beta)=f(x_k-\alpha g_k+\beta s_k)$, i.e.,
\begin{equation}\label{QuadModel2}
\psi_k(\alpha,\beta) = f(x_k)+\begin{bmatrix}
		-\|g_k\|^2\\ 
		g_k^Ts_k
	\end{bmatrix}^T \begin{bmatrix}
			\alpha\\ 
			\beta
		\end{bmatrix}
		+\frac{1}{2}
		\begin{bmatrix}
			\alpha\\ 
			\beta
		\end{bmatrix}^T
\begin{bmatrix}
	H_{11} & H_{12}  \\
	H_{12} & H_{22}
\end{bmatrix}
\begin{bmatrix}
			\alpha\\ 
			\beta
		\end{bmatrix}.
\end{equation}
Setting  $u=[\alpha\;\;\beta]^T$, the minimization problem of \eqref{QuadModel2} leads again to problem \eqref{eq:constrained_dir_prob2b}, being $H_k$ a generic $2\times 2$ matrix with suitable properties.

From a theoretical point of view, the following issues must be considered:
\begin{enumerate}[(a)]
\item we shall state conditions ensuring that the two-dimensional subproblem \eqref{eq:constrained_dir_prob2} admits solution;
\item we shall analyze under which conditions the obtained search direction $d_k$, coupled with line search techniques, allow to prove global convergence properties and complexity results for the iterative scheme \eqref{update}.
\end{enumerate}
\par\medskip\noindent
In the following, we will state conditions concerning either the $n\times n$ matrix $B_k$ appearing in \eqref{eq:constrained_dir_prob2} or directly the $2\times 2$ matrix $H_k$ in \eqref{eq:constrained_dir_prob2b} (with no explicit connection to $B_k$), in order to satisfy the above theoretical requirements.

\subsection{Related works and open problems}
\label{sec:related}
Before turning to the theoretical analysis, we shall  briefly discuss some important works directly related to the approach addressed in this paper. 
\begin{description}
	\item[Subspace minimization conjugate gradient (SMCG) methods] The class of SMCG type algorithms,
	for which we refer the reader, for example, to the very recent paper \cite{liu2024new}, consider the two-dimensional subproblem \eqref{phialfabeta}-\eqref{phialfabeta2} by assuming that
	the $n\times n$ matrix $B_k$ satisfies the secant equation
	$$
	B_ks_k=g_k-g_{k-1}.
	$$
	The issue deserving attention becomes that of suitably managing the term $g_k^TB_kg_k=\rho_k$. Several strategies have been proposed in connection with this issue. Some global convergence results are stated, while complexity results are not known. However,	as clearly written in \cite{liu2024new},  {\it``A question is naturally to be asked: can one develop an
		efficient SMCG method without determining the parameter $\rho_k$?''}. In other words, is it possible to define a convergent SMCG method without the need of handling the full matrix $B_k$?  
	By our work, we give a positive answer to this question.
	\smallskip
	\item[Common-directions methods] The framework discussed in \cite{lee2022limited} considers \eqref{eq:constrained_dir_prob2}-\eqref{update} in a more general setting, i.e., by assuming that
	$P_k$ is a general $n \times m_k$ matrix, with $m_k\ge 1$, containing at least a search direction which satisfies an angle condition.  The columns of $P_k$ are required to be linearly independent.
	Convergence results are stated by assuming that $\{B_k\}$ is a sequence of uniformly positive definite symmetric matrices bounded above. 
    However, while the convergence analysis relies on the properties
	of the $n\times n$ matrix $B_k$,  it is crucial, for computational performance, to only deal with the small matrix $H_k$.
  \smallskip
	\item[(Riemannian) dimension-reduced second-order method (RDRSOM)]
	The basic idea of this method, proposed in \cite{zhang2022drsom} and then extended to optimization on Riemannian manifolds \cite{tang2024riemannian}, is similar to that of \cite{lee2022limited}. This time,
	the search direction is determined by solving a cubic-regularized subproblem on the selected subspace which, in the simplest case, is the two-dimensional subspace spanned by the gradient and the previous direction.
The iteration complexity of the proposed algorithm is ${\cal O}(\epsilon^{-3/2})$. However, as remarked by the same authors,
this complexity result is based on a very restrictive assumption requiring
that the Hessian matrix is approximated accurately enough in the considered subspace, which therefore should be adaptively selected. In order to overcome this nontrivial issue, a practical version of the algorithm is presented with iteration complexity ${\cal O}(\epsilon^{-2})$.
Differently from \cite{lee2022limited}, the convergence analysis relies on the properties the $2\times 2$ matrix $H_k$ and  requires that the Hessian matrix of the objective function is Lipschitz continuous. This assumption can not be relaxed since the proposed method is based on the cubic regularization approach.
\end{description}
\par\medskip\noindent
Our main theoretical contribution with respect to all the works on SMCG methods and to \cite{lee2022limited}, thus concerns the definition of conditions on the $2\times 2$ matrix $H_k$
sufficient to ensure that the sequence of search directions $\{d_k\}$
is {\it gradient-related} and hence to state convergence and complexity results for the proposed gradient methods with momentum.  Note that working with only the $2\times 2$ matrix, regardless of the $n\times n$ matrix $B_k$, is fundamental to make these algorithmic frameworks computationally viable at large scale; our result on this issue thus enlightens how it can be done in a theoretically sound manner.

As regards the contribution with respect to \cite{tang2024riemannian}, we do not state assumptions on the Hessian matrix of the objective function:
we only require that the gradient is Lipschitz continuous. Thus,
we attain the same complexity result under much weaker assumptions.


\section{Existence of a solution for the two dimensional problem}
\label{sec:existence}
	In this section we state conditions on the matrices $B_k$ and $H_k$ sufficient to ensure that the subproblem
	\eqref{eq:constrained_dir_prob2b} admits solution.
	The condition on $H_k$ is rather simple to state.
	\begin{proposition}
		\label{prop:existence_H}
		Consider problem \eqref{eq:constrained_dir_prob2b}. If $H_k$ is a symmetric positive definite matrix, then problem \eqref{eq:constrained_dir_prob2b} admits an optimal solution.
	\end{proposition}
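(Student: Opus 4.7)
The plan is straightforward because the subproblem \eqref{eq:constrained_dir_prob2b} is a quadratic program in only two variables, and positive definiteness of $H_k$ gives us everything we need. First I would observe that the objective $\phi_k(u) = \tfrac{1}{2}u^T H_k u + g_k^T P_k u$ is a quadratic function in $u\in\Re^2$, with Hessian exactly $H_k$. Since $H_k$ is symmetric positive definite by hypothesis, $\phi_k$ is strictly convex on $\Re^2$.

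Next I would argue coercivity. Letting $\lambda_{\min}(H_k) > 0$ denote the smallest eigenvalue of $H_k$, one has
\begin{equation*}
\phi_k(u) \;\ge\; \tfrac{1}{2}\lambda_{\min}(H_k)\,\|u\|^2 - \|P_k^T g_k\|\,\|u\|,
\end{equation*}
so $\phi_k(u)\to +\infty$ as $\|u\|\to\infty$. Coercivity and continuity on $\Re^2$ already suffice to guarantee existence of a minimizer via the Weierstrass theorem applied to any sufficiently large sublevel set.

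Alternatively, and equivalently, I would appeal directly to the first-order optimality condition: $\nabla\phi_k(u) = H_k u + P_k^T g_k = 0$ has the explicit unique solution
\begin{equation*}
u_k \;=\; -H_k^{-1} P_k^T g_k,
\end{equation*}
which is well-defined since positive definiteness of $H_k$ implies $H_k$ is invertible. By strict convexity, this stationary point is in fact the unique global minimizer, proving the claim.

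There is essentially no obstacle in this proof; the statement is a textbook fact about strictly convex quadratic programs. The only mild notational care is in distinguishing the role of $H_k$ (the $2\times 2$ Hessian of the subproblem) from $B_k$ or $P_k$, but these were already clarified in the construction leading to \eqref{eq:constrained_dir_prob2b}.
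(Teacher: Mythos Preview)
Your proposal is correct and follows essentially the same approach as the paper: the paper's proof simply notes that the quadratic objective is strictly convex (since $H_k$ is positive definite) and hence admits a unique solution. Your argument supplies the standard details (coercivity or the explicit closed-form minimizer $u_k=-H_k^{-1}P_k^Tg_k$) behind that one-line observation.
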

	\begin{proof}
		The quadratic objective function is strictly convex and hence the problem admits a unique solution.
	\end{proof}

	Now, assume $H_k$ is obtained, starting from an $n\times n$ matrix $B_k$, according to \eqref{eq:Hk_explicit}. We can state the following condition. 
	
	\begin{proposition}
		\label{prop:existence_B_new}
		Consider problem \eqref{eq:constrained_dir_prob}.
		If $g_k$ and $s_k$ are non-zero vectors and $B_k$ is a symmetric positive definite matrix, then problem \eqref{eq:constrained_dir_prob} admits an optimal solution.
	\end{proposition}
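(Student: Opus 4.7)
The plan is to reduce problem \eqref{eq:constrained_dir_prob} to its unconstrained two-dimensional reformulation by substituting the constraint $d = -\alpha g_k + \beta s_k$, and then to leverage \cref{prop:existence_H} together with a case analysis on the linear (in)dependence of $g_k$ and $s_k$. Setting $P_k = [-g_k\;\; s_k]$ and $H_k = P_k^T B_k P_k$, the substitution turns the problem into \eqref{eq:constrained_dir_prob2b}, so the task becomes showing that this quadratic in $(\alpha,\beta)$ attains its minimum.

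I would first treat the generic case in which $g_k$ and $s_k$ are linearly independent. In that case $P_k$ has full column rank, so $P_k u \neq 0$ for every nonzero $u \in \mathbb{R}^2$, and
\begin{equation*}
    u^T H_k u \;=\; (P_k u)^T B_k (P_k u) \;>\; 0
\end{equation*}
by positive definiteness of $B_k$. Hence $H_k$ is symmetric positive definite and \cref{prop:existence_H} applies directly to \eqref{eq:constrained_dir_prob2b}, yielding a (unique) minimizer $(\alpha^\star,\beta^\star)$ from which one recovers the optimal $d^\star = -\alpha^\star g_k + \beta^\star s_k$.

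I would then handle the degenerate case, namely $s_k = \lambda g_k$ for some $\lambda \in \mathbb{R}$. Here $P_k$ has rank one, so $H_k$ is merely positive semidefinite and \cref{prop:existence_H} does not apply. However, the feasible set for $d$ collapses to the line $\{\gamma g_k : \gamma \in \mathbb{R}\}$, and the objective reduces to the one-dimensional quadratic
\begin{equation*}
    \gamma\|g_k\|^2 + \tfrac{1}{2}\gamma^2\, g_k^T B_k g_k,
\end{equation*}
which is strictly convex because $g_k \neq 0$ and $B_k \succ 0$ imply $g_k^T B_k g_k > 0$. Its unique minimizer is $\gamma^\star = -\|g_k\|^2/(g_k^T B_k g_k)$, and then, for instance, $(\alpha^\star,\beta^\star) = (-\gamma^\star, 0)$ together with $d^\star = \gamma^\star g_k$ solves \eqref{eq:constrained_dir_prob}.

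The main obstacle is precisely this degenerate case: the hypotheses only require $g_k, s_k$ to be nonzero, not linearly independent, so positive definiteness of $B_k$ does not automatically propagate to $H_k$. The observation that unlocks the proof is that, regardless of whether $\mathrm{span}\{g_k,s_k\}$ is one- or two-dimensional, the restriction of the strictly convex quadratic $d \mapsto g_k^T d + \tfrac{1}{2} d^T B_k d$ to that subspace is itself strictly convex and therefore admits a minimizer, which can always be written in the form $-\alpha^\star g_k + \beta^\star s_k$.
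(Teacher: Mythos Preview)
Your proof is correct and follows the same overall structure as the paper's: both reduce to the two-dimensional reformulation, split into cases according to whether $g_k$ and $s_k$ are linearly independent, and invoke \cref{prop:existence_H} in the independent case. The only divergence is in the degenerate case $s_k=\lambda g_k$. The paper stays in the $(\alpha,\beta)$-variables and argues that the normal equations $P_k^T B_k P_k u = -P_k^T g_k$ are consistent via a rank comparison (both the coefficient matrix and the augmented matrix have rank one), so a stationary point of the convex quadratic $q(u)$ exists. You instead collapse the feasible set for $d$ to the line $\{\gamma g_k\}$ and minimize the resulting one-dimensional strictly convex quadratic directly. Your route is a touch more elementary and geometric; the paper's has the minor advantage of exhibiting the solution in the same $(\alpha,\beta)$ coordinates used elsewhere. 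Either way the argument is short and the conclusion is the same.
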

	\begin{proof}
			Let $H_k$ be the matrix defined in \eqref{eq:Hk_explicit}. Let us consider problem \eqref{eq:constrained_dir_prob2b} 
			as equivalent reformulation of problem \eqref{eq:constrained_dir_prob}, i.e.,
			\begin{equation*}
				\begin{aligned}
					\min_{u} q(u)=\;&  \frac{1}{2}u^TP_k^T B_kP_ku + g_k^T  P_ku. 
				\end{aligned}
			\end{equation*}
			Let us first suppose that $g_k$ and $s_k$ are linearly independent. By definition, $H_k=P_k^TB_kP_k$. Since $B_k$ is positive definite, and $P_k = [-g_k\quad s_k]$ is full rank, $H_k$ is positive definite; then, by \cref{prop:existence_H}, problem \eqref{eq:constrained_dir_prob2b} admits solution.
			
			\smallskip
			On the other hand, let us assume that $g_k$ and $s_k$ are linearly dependent.
			In this case, we can write $s_k=\sigma g_k$ and it results that the Hessian matrix of the quadratic function $q(u)$
			is symmetric positive semi-definite. However,
			we can show that system 
			\begin{equation}\label{syst}
				P_k^T B_kP_ku=-P_k^Tg_k
			\end{equation}
			admits solution, i.e.,
			that there exists at least a point $\bar u$ such that $\nabla q(\bar u)=0$, and hence that $\bar u$ is a global minimizer of the convex function $q(u)$.
			Indeed, we have
			\begin{equation*}
				P_k^TB_kP_k = \begin{bmatrix}
					g_k^TB_kg_k& -\sigma g_k^TB_kg_k\\ 
					-\sigma g_k^TB_kg_k& \sigma^2g_k^TB_kg_k
				\end{bmatrix},
				\qquad
				-P_k^Tg_k=
				\begin{bmatrix}
					\|g_k\|^2 \\ 
					-\sigma \|g_k\|^2
				\end{bmatrix}.
			\end{equation*}
			By the positive definiteness of $B_k$, we have that $g_k^\top B_k g_k > 0$ and we can then write that
			$$
			1=\text{rank}\left(P_k^TB_kP_k\right)=\text{rank}\left(\left[P_k^TB_kP_k \quad -P_k^Tg_k\right]\right).
			$$
			This implies that system \eqref{syst} admits a solution, which is also a solution of problem \eqref{eq:constrained_dir_prob2b} and thus \eqref{eq:constrained_dir_prob}.
	\end{proof}
	
\section{Properties of the search directions}
\label{sec:grad_rel_dir}
In this section we consider again subproblem
\eqref{eq:constrained_dir_prob} by means of the equivalent reformulations
 \eqref{eq:constrained_dir_prob2}-\eqref{eq:constrained_dir_prob2b} and
we assume that it admits solution $u_k=[\alpha_k\;\;\beta_k]^T$, i.e., that at least one assumption
of the preceding section either on $B_k$ or on $H_k$ holds.
We are interested in studying further conditions on the above matrices to ensure that the obtained search directions
$$
d_k=-\alpha_kg_k+\beta_ks_k
$$
are gradient-related according to the following well-known definition \cite{cartis2022evaluation,cartis2015worst}.
\begin{definition}
	\label{def:gr}
	A sequence of search directions $\{d_k\}$ is \textit{gradient-related} to the sequence of solutions $\{x_k\}$ if there exist $c_1>0$ and $c_2>0$ such that, for all $k$, we have
	\begin{equation}
		\label{eq:gr_toint}
		g_k^T d_k\le-c_1\|g_k\|^2, \qquad \|d_k\|\le c_2\|g_k\|.
	\end{equation}
\end{definition}
\par\medskip\noindent
As already said, the above property is a requirement sufficient to guarantee that the sequence generated according to the following scheme
$$
x_{k+1}=x_k+\eta_k d_k,
$$
where $\eta_k$ is the stepsize computed by an Armjio-type line search, is globally convergent to stationary points. Moreover, under this assumption the algorithm can be proved to have a worst case iteration and function evaluations complexity bound of
${\cal O}(\epsilon^{-2})$ to reach a solution $\bar{x}$ with $\|\nabla f(\bar{x})\|\le \epsilon$. Note that this bound is actually tight for first-order methods under standard first-order smoothness assumptions. 
 
\subsection{Conditions on ${B_k}$}\label{sec:nn}

First we state the following result concerning the $n\times n$ matrix $B_k$. Note that, from here onward, we will denote by $\lambda_{\min}(A)$ and 
$\lambda_{\max}(A)$ the minimum and maximum eigenvalues respectively of a symmetric matrix $A$.  

\begin{proposition}\label{teorembk}
  Let $d_k$ be defined by
 $d_k=-\alpha_k g_k +\beta_k s_k$,
where $[\alpha_k\;\;\beta_k]^T$ is solution of \eqref{eq:constrained_dir_prob2}.

  \label{prop:gr_B}
		Let $\{B_k\}\subseteq\mathbb{R}^{n\times n}$ be the sequence of symmetric matrices 
defining problems (\ref{eq:constrained_dir_prob2})
and assume that
		there exist scalars $0<\eta_1\le\eta_2$ such that
		\begin{equation}\label{ipotesi}
			\eta_1\le \lambda_{\min}(B_k)\le \lambda_{\max}(B_k)\le \eta_2
		\end{equation}
holds for all $k$. Then the direction $d_k$ satisfies the following conditions:
	\begin{eqnarray*}
		&&g_k^Td_k\le -\frac{\eta_1}{\eta_2^2}\|g_k\|^2\\
  &&\frac{1}{\eta_2} \|g_k\|\le  \|d_k\|\le\frac{2}{\eta_1}\|g_k\|.
	\end{eqnarray*}

	\end{proposition}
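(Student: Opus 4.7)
My plan is to extract all three bounds from two ingredients: the first-order optimality of $u_k=[\alpha_k,\beta_k]^T$ for the two-dimensional problem \eqref{eq:constrained_dir_prob2b}, combined with a comparison of the optimal value against a conservative trial point. The starting observation is that $u_k$ satisfies the normal equation $H_k u_k = -P_k^T g_k$, i.e.\ $P_k^T(B_k d_k + g_k)=0$; left-multiplying by $u_k^T$ yields the pivotal identity
\[
g_k^T d_k \;=\; -\,d_k^T B_k d_k,
\]
which, together with \eqref{ipotesi}, already gives the sandwich $\eta_1\|d_k\|^2 \le -g_k^T d_k \le \eta_2\|d_k\|^2$. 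Since this only uses the normal equation, it remains valid in the degenerate case $s_k=\sigma g_k$ addressed in \cref{prop:existence_B_new}, where $u_k$ need not be unique but $d_k = P_k u_k$ still is.

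Next I would bound $q(u_k)$ from above by evaluating $q(u)=g_k^TP_ku+\tfrac12u^TH_ku$ at the trial point $\tilde u = (1/\eta_2,0)^T$, which corresponds to the Cauchy-like direction $-g_k/\eta_2$. A short calculation using $g_k^T B_k g_k \le \eta_2 \|g_k\|^2$ gives $q(\tilde u)\le -\|g_k\|^2/(2\eta_2)$, and hence, by the identity above (which reduces $q(u_k)$ to $-\tfrac12 d_k^T B_k d_k$), one obtains $d_k^T B_k d_k \ge \|g_k\|^2/\eta_2$. From $\eta_2\|d_k\|^2 \ge d_k^T B_k d_k$ we then get the lower bound $\|d_k\|\ge \|g_k\|/\eta_2$, and re-inserting this into $-g_k^T d_k \ge \eta_1\|d_k\|^2$ yields the descent inequality $g_k^T d_k \le -(\eta_1/\eta_2^2)\|g_k\|^2$.

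For the upper bound on $\|d_k\|$, I would combine the identity $-g_k^T d_k = d_k^T B_k d_k \ge \eta_1\|d_k\|^2$ with Cauchy--Schwarz $-g_k^T d_k \le \|g_k\|\,\|d_k\|$, giving $\|d_k\| \le \|g_k\|/\eta_1 \le 2\|g_k\|/\eta_1$. No single inequality is particularly delicate; the main bookkeeping difficulty is the degenerate case $g_k\parallel s_k$, since there $u_k$ need not be unique and one has to argue that both the identity $g_k^T d_k = -d_k^T B_k d_k$ and the trial-point comparison depend only on $d_k$, not on $u_k$ itself --- a point that \cref{prop:existence_B_new} already disposes of, as $d_k$ is uniquely determined and any solution of the normal equation suffices for every step of the argument.
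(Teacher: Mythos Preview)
Your argument is correct. The identity $g_k^Td_k=-d_k^TB_kd_k$, the descent bound, and the upper bound on $\|d_k\|$ are obtained exactly as in the paper. The one genuine difference is how you get the lower bound $\|d_k\|\ge\|g_k\|/\eta_2$. The paper reads off the \emph{first row} of the normal equation $P_k^TB_kP_ku_k=-P_k^Tg_k$, namely $-g_k^TB_kd_k=\|g_k\|^2$, and then applies $\|g_k\|^2\le\eta_2\|g_k\|\|d_k\|$ directly. You instead use a variational/Cauchy-point argument, comparing $q(u_k)=-\tfrac12 d_k^TB_kd_k$ with $q(\tilde u)$ at $\tilde u=(1/\eta_2,0)^T$ to obtain $d_k^TB_kd_k\ge\|g_k\|^2/\eta_2$ and then $\|d_k\|\ge\|g_k\|/\eta_2$. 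The paper's route is a bit shorter and purely algebraic (it only needs that $u_k$ solves the normal equation, not that it is a minimizer), whereas your trial-point comparison is a standard optimality-based device that would remain valid, for instance, if $u_k$ were only required to achieve Cauchy decrease rather than exact optimality. Your handling of the degenerate case $s_k\parallel g_k$ is fine: both the identity and the inequality $q(u_k)\le q(\tilde u)$ depend only on $d_k=P_ku_k$ and on $u_k$ being a (not necessarily unique) minimizer, which \cref{prop:existence_B_new} guarantees.
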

	\begin{proof}
		Let us recall  problem (\ref{eq:constrained_dir_prob2}) 
		\begin{equation*}
			\begin{aligned}
				\min_{u}\;&   \frac{1}{2}u^TP_k^T B_kP_ku +g_k^T  P_ku,
			\end{aligned}
		\end{equation*}
		where 
		$
		d=P_ku
		$, $
		P_k=[-g_k\quad s_k]$ and $u=[\alpha\; \beta]^T$.
				
		From \cref{prop:existence_B_new} we have that there exists at least a solution $u_k$ of problem \eqref{eq:constrained_dir_prob}, satisfying the following linear system
		\begin{equation}\label{systemb}
			P_k^TB_kP_ku_k=-P_k^Tg_k.
		\end{equation}
		Multiplying both the members in \eqref{systemb} by $u_k^T$, we obtain $u_k^TP_k^TB_kP_ku_k=-u_k^TP_k^Tg_k$,
		i.e.,  
		\begin{equation}\label{relation1}
			d_k^TB_kd_k=-g_k^Td_k,
		\end{equation}
		being $d_k=P_ku_k$.
		Now, consider the first row of system \eqref{systemb}, i.e.,
		\begin{equation*}\label{system2-A}
			-g_k^TB_kd_k=\|g_k\|^2.
		\end{equation*}
		Using \eqref{relation1} and recalling assumption \eqref{ipotesi}, we can write
		$$
		\|g_k\|^2=|g_k^TB_kd_k|\le \|g_k\|\|B_k\|\|d_k\|\le \lambda_{\max}(B_k)\|g_k\|\|d_k\|\le \eta_2\|g_k\|\|d_k\|,
		$$
		and hence we obtain
		\begin{equation}\label{relation2}
			\|d_k\|\ge {\frac{1}{\eta_2}}\|g_k\|.
		\end{equation}
		Recalling \eqref{relation1}, using \eqref{ipotesi} and  (\ref{relation2}), it follows
		\begin{equation}\label{final2}
			-g_k^Td_k\ge \lambda_{\min}(B_k)\|d_k\|^2\ge \eta_1\|d_k\|^2\ge 
			{\frac{\eta_1}{\eta_2^2}}
			\|g_k\|^2.
		\end{equation}
		Considering again \eqref{relation1} we can also write
		$$
		\|g_k\|\|d_k\|\ge -g_k^Td_k=d_k^TB_kd_k\ge \lambda_{\min}(B_k)\|d_k\|^2\ge \eta_1\|d_k\|^2,
		$$
		so that we have
		\begin{equation}\label{final1}
			\|d_k\|\le {\frac{1}{\eta_1}}\|g_k\|.
		\end{equation}
		Then, \eqref{relation2}, \eqref{final2} and \eqref{final1} prove the thesis of the proposition.
	\end{proof}

\subsection{Conditions on $H_k$}
\label{sec:22}

In the previous section, problem \eqref{eq:constrained_dir_prob} was rewritten as 
\begin{equation*}
	\begin{aligned}
		\min_{\alpha, \beta}\;&   \frac{1}{2}\ \begin{bmatrix}
			\alpha\\ 
			\beta
		\end{bmatrix}^TP_k^T B_kP_k\begin{bmatrix}
		\alpha\\ 
		\beta
	\end{bmatrix} +g_k^T  P_k\begin{bmatrix}
	\alpha\\ 
	\beta
\end{bmatrix},
	\end{aligned}
\end{equation*}
having set
 $
P_k=\begin{bmatrix}-g_k & s_k\end{bmatrix}$. 
\par\noindent
Then, \cref{prop:gr_B} shows that condition \cref{ipotesi} on the eigenvalues of the matrices $B_k$ guarantees that the solution $[\alpha_k\;\;\beta_k]^T$ of \eqref{eq:constrained_dir_prob2b} produces directions
$d_k=-\alpha_k g_k +\beta_k s_k$ that are gradient-related. 
However, checking or ensuring that a sequence of $n\times n$ matrices $\{B_k\}$ have uniformly bounded eigenvalues can be extremely challenging for large-dimensional optimization problems.
\par
Problem \eqref{eq:constrained_dir_prob2b} can be rewritten in the following form:
\begin{equation}\label{problem-Hk-new}
	\begin{aligned}
		\min_{\alpha, \beta}\;&   \frac{1}{2}\ \begin{bmatrix}
			\alpha\\ 
			\beta
		\end{bmatrix}^TH_k\begin{bmatrix}
			\alpha\\ 
			\beta
		\end{bmatrix} +\begin{bmatrix}
		-\|g_k\|^2\\ 
		g_k^Ts_k
	\end{bmatrix}^T \begin{bmatrix}
			\alpha\\ 
			\beta
		\end{bmatrix}.
	\end{aligned}
\end{equation}

In this section we analyze conditions on the $2\times 2$ matrix $H_k$ sufficient to guarantee suitable properties to the direction $d_k$.

This analysis is fundamental for two reasons. The first one
is that, as said before, these ``direct" conditions on the low-dimensional matrix can be checked and ensured (possibly by suitable modifications) regardless of the $n\times n$ matrix $B_k$.
The second reason is that they can be employed in connection
with any $2\times 2$ matrix defining a quadratic model --- see \eqref{QuadModel2} --- to be minimized for determining the values $\alpha_k$ and $\beta_k$ that characterize the update rule of gradient methods with momentum.
\par
Following the latter point,
 we assume that $H_k$ is any symmetric positive definite matrix.
Then, (\ref{eq:Hk_explicit}) is a particular case of $H_k$ provided that $B_k$ is positive definite and that $g_k$ and $s_k$ are linearly independent.

\par
A possible approach to define conditions to impose on the sequence of  matrices 
 $\{H_k\}$
might, in principle, draw inspiration from \cref{prop:gr_B},  concerning the sequence of matrices $\{B_k\}$. 
First, we therefore state the following theorem involving the sequence of  matrices 
 $\{H_k\}$.
\begin{proposition}
	\label{prop:gr_Hk}
	Let $d_k$ be defined by
	$
		d_k=-\alpha_k g_k +\beta_k s_k,
	$
	where
	$[\alpha_k\;\;\beta_k]^T$ is the solution of (\ref{problem-Hk-new}).
	\par
	Let $\{H_k\}\subseteq\mathbb{R}^{2\times 2}$ be the sequence of symmetric  matrices defining problems (\ref{problem-Hk-new}) and assume that
	there exist scalars $0< \hat{c}_1 \le \hat{c}_2$ such that 
	\begin{equation}\label{assT1-biss}
		\hat{c}_1 \le \lambda_{\min}(H_k)\le \lambda_{\max}( H_k)\le  \hat{c}_2
	\end{equation}
	holds for all $k$.
	
	Then the direction $d_k$ satisfies the following conditions:
	\begin{eqnarray}\label{thesis-tilde H1}
		&&g_k^Td_k\le -\frac{1}{\hat{c}_2}\|g_k\|^4\\
		\label{thesis-tilde H2}
		&&\frac{1}{\hat{c}_2} \|g_k\|^3\le  \|d_k\|\le \frac{1}{\hat c_1} (\| 
		g_k\|+\|s_k\|
		) (
		\| g_k\|^2+
		|g_k^T s_k|).
	\end{eqnarray}
\end{proposition}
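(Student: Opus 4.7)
The plan is to mimic the structure of \cref{prop:gr_B}, but now exploiting that $H_k$ is a $2\times 2$ symmetric positive definite matrix so that the subproblem \eqref{problem-Hk-new} has a unique minimizer given explicitly by a small linear system. I will write $b_k := P_k^{T} g_k = [-\|g_k\|^2,\; g_k^{T} s_k]^{T}$, which is precisely the linear term appearing in \eqref{problem-Hk-new}.

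By \eqref{assT1-biss}, $H_k$ is positive definite, so the quadratic in \eqref{problem-Hk-new} is strictly convex and its unique minimizer $u_k$ satisfies the optimality condition $H_k u_k = -b_k$, i.e. $u_k = -H_k^{-1} b_k$. Since $d_k = P_k u_k$, I would compute $g_k^{T} d_k = (P_k^{T} g_k)^{T} u_k = b_k^{T} u_k = -b_k^{T} H_k^{-1} b_k$. The eigenvalue assumption gives $b_k^{T} H_k^{-1} b_k \ge \|b_k\|^2/\lambda_{\max}(H_k) \ge \|b_k\|^2/\hat{c}_2$, and since $\|b_k\|^2 = \|g_k\|^4 + (g_k^{T} s_k)^2 \ge \|g_k\|^4$, this yields \eqref{thesis-tilde H1}.

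For the lower bound on $\|d_k\|$ in \eqref{thesis-tilde H2}, I would invoke Cauchy--Schwarz: $|g_k^{T} d_k| \le \|g_k\|\|d_k\|$, which combined with \eqref{thesis-tilde H1} immediately gives $\|d_k\| \ge \|g_k\|^3/\hat{c}_2$. For the upper bound, starting from $\|d_k\| \le |\alpha_k|\|g_k\| + |\beta_k|\|s_k\|$ and applying Cauchy--Schwarz to the pair of two-vectors $(|\alpha_k|,|\beta_k|)$ and $(\|g_k\|,\|s_k\|)$, followed by $\sqrt{\|g_k\|^2+\|s_k\|^2} \le \|g_k\|+\|s_k\|$, produces $\|d_k\| \le \|u_k\|\,(\|g_k\|+\|s_k\|)$. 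Finally, $\|u_k\| = \|H_k^{-1} b_k\| \le \|b_k\|/\lambda_{\min}(H_k) \le \|b_k\|/\hat{c}_1$, and applying $\sqrt{a^2+b^2} \le a+b$ once more to $\|b_k\|$ gives $\|b_k\| \le \|g_k\|^2 + |g_k^{T} s_k|$, which chains to the stated upper bound.

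The main difficulty is not conceptual but bookkeeping: the bounds in \eqref{thesis-tilde H1}--\eqref{thesis-tilde H2} have a very specific shape (cubic/quartic powers of $\|g_k\|$ and the explicit factor $\|g_k\|+\|s_k\|$), so one must be careful to apply $\sqrt{a^2+b^2}\le a+b$ at exactly the two right places in the chain of elementary inequalities to land on the claimed constants, rather than producing a weaker expression involving $\sqrt{\|g_k\|^2+\|s_k\|^2}$ or $\sqrt{\|g_k\|^4+(g_k^{T} s_k)^2}$.
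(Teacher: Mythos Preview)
Your proposal is correct and follows essentially the same route as the paper: both compute $u_k=-H_k^{-1}b_k$, obtain $-g_k^Td_k=b_k^T H_k^{-1}b_k\ge \|b_k\|^2/\hat c_2\ge \|g_k\|^4/\hat c_2$, deduce the lower bound on $\|d_k\|$ via Cauchy--Schwarz, and then chain norm inequalities for the upper bound. The only cosmetic difference is that the paper writes the upper bound as $\|d_k\|\le \|H_k^{-1}\|\,\|P_k\|\,\|b_k\|$ using submultiplicativity of matrix norms, whereas you reach the equivalent estimate via the triangle inequality plus Cauchy--Schwarz on the two-vectors $(|\alpha_k|,|\beta_k|)$ and $(\|g_k\|,\|s_k\|)$; both paths reduce to $\sqrt{a^2+b^2}\le a+b$ applied twice and give identical constants.
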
	
\begin{proof} 
	We know that $\alpha_k$ and $\beta_k$ are such that:
	\begin{equation}\label{sistem-Hk}
		\begin{bmatrix}
			\alpha_k\\
			\beta_k
		\end{bmatrix}\ =\ { H_k}^{-1} \begin{bmatrix}
			\| g_k\|^2\\
			-g_k^T s_k
		\end{bmatrix}.
	\end{equation}
	Multiplying both sides of the above equation by $\begin{bmatrix}
		\| g_k\|^2& -g_k^T s_k\end{bmatrix}
	$  we obtain:
	\begin{equation}\label{start2x2-biss}
		\alpha_k\|g_k\|^2 - \beta_kg_k^T s_k = 
		\begin{bmatrix}\|g_k\|^2 &  -g_k^T s_k\end{bmatrix} H_k^{-1} \begin{bmatrix}
			\| g_k\|^2\\
			-g_k^T s_k
		\end{bmatrix}. 
	\end{equation}
	We note  that $-g_k^T d_k  = \alpha_k\|g_k\|^2 - \beta_kg_k^T s_k$; thus, by using \cref{assT1-biss} and \eqref{start2x2-biss} we have:
	$$
	-g_k^T d_k  
	\geq \lambda_{min}( H^{-1}_k) 
	\biggl(\|g_k\|^4+ (g_k^T s_k)^2\biggr),
	$$
	which implies:
	$$
	g_k^T d_k  
	\leq -\frac{1}{\hat{c}_2}
	\|g_k\|^4,
	$$
	and thus proves  \eqref{thesis-tilde H1}.
	
	\smallskip\noindent
	The previous inequality and the Schwarz inequality imply
	$$\|d_k\|\ge \frac{1}{\hat{c}_2} \|g_k\|^3$$
	which proves the first inequality of (\ref{thesis-tilde H2}).
	\par
	By using again  \eqref{sistem-Hk}, we can get an upper bound on the norm of $d_k$:
	\begin{eqnarray*}
		&& \|d_k\|=\biggl\|\begin{bmatrix}
			-g_k& s_k
		\end{bmatrix} \begin{bmatrix}
			\alpha_k\\
			\beta_k
		\end{bmatrix}\ \biggr\|
		=\biggl\|\ \begin{bmatrix}
			-g_k& s_k
		\end{bmatrix} { H_k}^{-1} \begin{bmatrix}
			\| g_k\|^2\\
			-g_k^T s_k
		\end{bmatrix}\biggr\|\\
		&&\qquad\ \le \biggl\| { H_k}^{-1} \biggr\| \ 
		\biggl\| \begin{bmatrix}
			-g_k& s_k
		\end{bmatrix} \biggr\| \ 
		\biggl\| \begin{bmatrix}
			\| g_k\|^2\\
			-g_k^T s_k
		\end{bmatrix} \biggr\|\\
		&&\qquad\ \le \frac{1}{\hat c_1} (\| 
		g_k\|+\|s_k\|
		) (
		\| g_k\|^2+
		|g_k^T s_k|
		),
	\end{eqnarray*}
	which proves the second  relation of (\ref{thesis-tilde H2}) and completes the proof.
\end{proof}

According to the above result, suitable descent properties of the obtained search directions hold and
would in fact be sufficient, coupled with the employment of a standard Armjio-type line search,
to define a globally convergent gradient method with momentum. 

However, as we detail below, the obtained sequence of directions $\{d_k\}$ is not gradient-related according to \cref{def:gr} and, hence, an ${\cal O}(\epsilon^{-2})$ complexity bound cannot be ensured.
\begin{remark}
We observe that when the matrix $H_k$ is given by (\ref{eq:Hk_explicit}), it tends to become the null matrix as $x_k$ approaches a stationary point. Hence,
uniform boundedness conditions on the eigenvalues of $H_k$ are in contrast with the matrices deriving from \eqref{eq:constrained_dir_prob}.

\noindent Even more in general,
under the assumptions of \cref{prop:gr_Hk}, it is actually not possible to
ensure that the obtained sequence of directions $\{d_k\}$ is gradient-related. Indeed, consider the cases where
 $g_k^T s_k=0$. The inequality \eqref{thesis-tilde H2} implies:
$$
|g_k^T d_k|\le   \|g_k\| \|d_k\|\le \frac{1}{\hat c_1} (\| 
		g_k\|+\|s_k\|
	) \| g_k\|^3,
$$
and hence, by assuming that $\{s_k\}$ is bounded, for sufficiently small values of $\|g_k\|$, the direction $d_k$ does not satisfy the first requirement of (\ref{eq:gr_toint}).
\end{remark}
\par\medskip\noindent
By the next proposition, we finally state suitable conditions on the 
sequence  of matrices $\{H_k\}$, that also take into account
the sequences of vectors involved, i.e., $\{g_k\}$ and $\{s_k\}$. These conditions {represent the main theoretical contribution of the paper, as they} are sufficient to ensure that the obtained sequence of search directions $\{d_k\}$ is gradient-related. 
	\begin{proposition}
 \label{prop:gr_tilde Hk}
Let $\{H_k\}\subseteq\mathbb{R}^{2\times 2}$ be the sequence of symmetric  matrices defining problems (\ref{problem-Hk-new}) and assume that
	there exist scalars $0< c_1 \le c_2$ such that
	\begin{equation}\label{assT1bis}
		 c_1 \le \lambda_{\min}(D_k^{-1} H_kD_k^{-1})\le \lambda_{\max}(D_k^{-1}H_kD_k^{-1})\le  c_2
	\end{equation}
 holds for all $k$, where
 $$D_k=\begin{bmatrix}
 	\|g_k\|& 0\\ 
 	0& \|s_k\|
 \end{bmatrix}.$$
  Let $d_k$ be defined by
 $d_k=-\alpha_k g_k +\beta_k s_k$,
where $[\alpha_k\;\;\beta_k]^T$ is solution of problem (\ref{problem-Hk-new}).
 Then, the direction $d_k$ satisfies the following conditions:
	\begin{eqnarray}\label{thesis-tilde H1new}
		&&g_k^Td_k\le -\frac{1}{c_2}\|g_k\|^2\\
  \label{thesis-tilde H2new}
  &&\frac{1}{c_2} \|g_k\|\le  \|d_k\|\le\frac{2}{c_1}\|g_k\|.
	\end{eqnarray}
\end{proposition}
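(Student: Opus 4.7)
The proof plan is to exploit a diagonal rescaling that turns the assumption on $\tilde{H}_k := D_k^{-1} H_k D_k^{-1}$ into clean bounds on $d_k$. First I write the first-order optimality condition for problem \eqref{problem-Hk-new}: since $H_k$ is positive definite (by the hypothesis $\lambda_{\min}(\tilde{H}_k) \ge c_1 > 0$ together with positive definiteness of $D_k^{-1}$), the solution satisfies
\begin{equation*}
H_k \begin{bmatrix}\alpha_k\\ \beta_k\end{bmatrix} = \begin{bmatrix}\|g_k\|^2\\ -g_k^T s_k\end{bmatrix}.
\end{equation*}
Introducing the change of variables $v_k := D_k [\alpha_k\;\; \beta_k]^T$ and multiplying the optimality condition on the left by $D_k^{-1}$ yields $\tilde{H}_k v_k = b_k$, where $b_k := [\|g_k\|,\ -g_k^T s_k/\|s_k\|]^T$. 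By Cauchy--Schwarz applied to the second component, $\|g_k\| \le \|b_k\| \le \sqrt{2}\,\|g_k\|$.

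For the descent inequality \eqref{thesis-tilde H1new}, the crucial identity is
\begin{equation*}
-g_k^T d_k = \alpha_k\|g_k\|^2 - \beta_k g_k^T s_k = \begin{bmatrix}\|g_k\|^2\\ -g_k^T s_k\end{bmatrix}^T \begin{bmatrix}\alpha_k\\ \beta_k\end{bmatrix} = b_k^T v_k = b_k^T \tilde{H}_k^{-1} b_k,
\end{equation*}
where the last equality follows from $v_k = \tilde{H}_k^{-1} b_k$. The hypothesis \eqref{assT1bis} gives $\lambda_{\max}(\tilde{H}_k^{-1}) \le 1/c_1$ and $\lambda_{\min}(\tilde{H}_k^{-1}) \ge 1/c_2$, hence $-g_k^T d_k \ge \|b_k\|^2/c_2 \ge \|g_k\|^2/c_2$, which is \eqref{thesis-tilde H1new}. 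The lower bound in \eqref{thesis-tilde H2new} then follows immediately from Cauchy--Schwarz: $\|g_k\|\|d_k\| \ge |g_k^T d_k| \ge \|g_k\|^2/c_2$.

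For the upper bound in \eqref{thesis-tilde H2new}, I write $d_k = P_k [\alpha_k\;\;\beta_k]^T = (P_k D_k^{-1}) v_k$. The matrix $P_k D_k^{-1} = [-g_k/\|g_k\|,\ s_k/\|s_k\|]$ has two unit-norm columns, so its Gram matrix has the form $\bigl[\begin{smallmatrix} 1 & \ast \\ \ast & 1\end{smallmatrix}\bigr]$ with off-diagonal entry of absolute value at most $1$; thus its largest eigenvalue is at most $2$ and $\|P_k D_k^{-1}\| \le \sqrt{2}$. Combining with $\|v_k\| \le \|\tilde{H}_k^{-1}\|\,\|b_k\| \le (\sqrt{2}/c_1)\|g_k\|$ yields $\|d_k\| \le \sqrt{2}\cdot(\sqrt{2}/c_1)\|g_k\| = (2/c_1)\|g_k\|$.

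The only nontrivial step is recognizing the correct rescaling $v_k = D_k [\alpha_k\;\;\beta_k]^T$ that conjugates $H_k$ into $\tilde{H}_k$ on both sides; once this is in place, everything reduces to Cauchy--Schwarz and standard spectral bounds. I expect no serious obstacle; the delicate check is just to verify that the $\sqrt{2}$ factors combine to give exactly the constant $2/c_1$ claimed in the statement, and that $b_k$ has norm sandwiched between $\|g_k\|$ and $\sqrt{2}\|g_k\|$ rather than something worse that would involve $\|s_k\|$.
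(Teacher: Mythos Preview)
Your proof is correct and follows essentially the same approach as the paper's: both introduce the rescaling $\tilde H_k=D_k^{-1}H_kD_k^{-1}$ and $v_k=D_k[\alpha_k\;\beta_k]^T$, obtain the descent bound via $-g_k^Td_k=b_k^T\tilde H_k^{-1}b_k\ge\|g_k\|^2/c_2$, and deduce the lower bound on $\|d_k\|$ by Cauchy--Schwarz. The only minor difference is in the upper bound: the paper instead multiplies the optimality system by $[\alpha_k\;\beta_k]$ to get $-g_k^Td_k=v_k^T\tilde H_kv_k\ge c_1\|v_k\|^2\ge(c_1/2)\|d_k\|^2$ and then applies Cauchy--Schwarz, whereas you bound $\|d_k\|$ directly via $\|P_kD_k^{-1}\|\le\sqrt{2}$ and $\|v_k\|\le(\sqrt{2}/c_1)\|g_k\|$; both routes yield the same constant $2/c_1$.
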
	
\begin{proof}
For simplicity, let us introduce the following matrix:
\begin{gather*}
    \label{matr1}
    \tilde H_k=D_k^{-1}H_kD_k^{-1}=\begin{bmatrix}
		\frac{ (H_{11})_k}{\|g_k\|^2}& \frac{(H_{12})_k}{\|s_k\|\,\|g_k\|}\\ 
		\frac{ (H_{12})_k}{\|s_k\|\,\|g_k\|}& \frac{ (H_{22})_k}{\|s_k\|^2}
	\end{bmatrix}
\end{gather*}
Then, problem \eqref{problem-Hk-new} can be rewritten as:
\begin{equation*}\label{problem-tilde-Hk-new}
	\begin{aligned}
		\min_{\alpha, \beta}\;&   \frac{1}{2}\ \begin{bmatrix}
			\alpha\\ 
			\beta
		\end{bmatrix}^T D_k \tilde H_k D_k\begin{bmatrix}
			\alpha\\ 
			\beta
		\end{bmatrix} +\begin{bmatrix}
			-\|g_k\|^2\\ 
			g_k^Ts_k
		\end{bmatrix}^T \begin{bmatrix}
			\alpha\\ 
			\beta
		\end{bmatrix},
	\end{aligned}
\end{equation*}
and its optimal solutions
 $\alpha_k$ and $\beta_k$ are such that
		\begin{equation}\label{start2x2}
D_k \tilde H_k D_k
\begin{bmatrix}
	\alpha_k\\
	\beta_k
\end{bmatrix}\ =\ \begin{bmatrix}
	\| g_k\|^2\\
	-g_k^T s_k
\end{bmatrix},
	\end{equation}
from which we have:
		\begin{equation}\nonumber
 D_k
	\begin{bmatrix}
		\alpha_k\\
		\beta_k
	\end{bmatrix}\ =\ {\tilde H_k}^{-1} D_k^{-1}\begin{bmatrix}
		\| g_k\|^2\\
		-g_k^T s_k
	\end{bmatrix},
\end{equation}
namely:
$$
\begin{bmatrix}
	\alpha_k\|g_k\|\\
	\beta_k\|s_k\|
\end{bmatrix}\ =\ {\tilde H_k}^{-1}\begin{bmatrix}
	\| g_k\|\\
	-\frac{g_k^T s_k}{\|s_k\|}
\end{bmatrix}.
$$
Multiplying both sides of the above equation by $\begin{bmatrix}
	\| g_k\|& -\frac{g_k^T s_k}{\|s_k\|}\end{bmatrix}
	$  we obtain:
\begin{equation}\label{start2x2-bis}
 \alpha_k\|g_k\|^2 - \beta_kg_k^T s_k = 
\begin{bmatrix}\|g_k\| &  -\frac{g_k^T s_k}{\|s_k\|}\end{bmatrix} \tilde H_k^{-1} \begin{bmatrix}
	\| g_k\|\\
	-\frac{g_k^T s_k}{\|s_k\|}
\end{bmatrix}. 
\end{equation}
Recalling \cref{assT1bis} and that $-g_k^T d_k  = \alpha_k\|g_k\|^2 - \beta_kg_k^T s_k$, we have from (\ref{start2x2-bis}):
$$
-g_k^T d_k  
\geq \lambda_{min}(\tilde H^{-1}) 
\biggl(\|g_k\|^2+ \left(\frac{g_k^T s_k}{\|s_k\|}\right)^2\biggr),
$$
 and thus
\begin{eqnarray} \label{newcond1tris}
&&g_k^T d_k  
\leq -\frac{1}{c_2}
\|g_k\|^2.
\end{eqnarray}
Then, Schwarz inequality implies:
\begin{equation}\label{newcond1}
	 \| g_k\|\le  c_2 \, \|d_k\|.
\end{equation}
Now,  multiplying  both terms of equality \eqref{start2x2}
by $\begin{bmatrix}
	\alpha_k&
	\beta_k
\end{bmatrix}$ we obtain:
		\begin{equation*}\label{start2x2-tris}
\begin{bmatrix}
	\alpha_k \| g_k\|\\
	\beta_k \| s_k\|
\end{bmatrix}^T \tilde H_k 
	\begin{bmatrix}
		\alpha_k \| g_k\|\\
		\beta_k \| s_k\|
	\end{bmatrix}\ = -g_k^T(-\alpha g_k+\beta_k s_k)=-g_k^T d_k,
\end{equation*}
from which we get:
		\begin{equation*}\label{newcond2}
	\begin{aligned}
		-g_k^T d_k & \ge \lambda_{min}\left(\tilde{H}_k\right)(\alpha_k^2\|g_k\|^2 + \beta_k^2\|s_k\|^2) \\&\geq c_1(\alpha_k^2\|g_k\|^2 + \beta_k^2\|s_k\|^2)
  \\&\ge \frac{c_1}{2}\|d_k\|^2.
	\end{aligned}
\end{equation*}
Then, by using Schwarz inequality we get:
$$
\|g_k\|\|d_k\|\geq|g_k^T d_k| \geq \frac{c_1}{2}\|d_k\|^2
$$
and hence
\begin{equation} \label{newcond1bis}
\|d_k\| \leq \frac{2}{c_1}\|g_k\|.
\end{equation}
Now (\ref{newcond1tris}), (\ref{newcond1}) and (\ref{newcond1bis}) imply  (\ref{thesis-tilde H1new}) and (\ref{thesis-tilde H2new})  and, hence, 
the proof is complete.
\end{proof}
\begin{remark}\label{remarknew}
It is important to note that  assumption \eqref{assT1bis} is not difficult to satisfy.
In fact, given any sequence of symmetric matrices 
$\{\hat H_k\}\subseteq\mathbb{R}^{2\times 2}$ for which 
	there exist scalars $0< \tilde c_1 \le \tilde{c}_2$ such that, for all $k$,
	\begin{equation}\label{assT1tris}
		 \tilde{c}_1 \le \lambda_{\min}(\hat H_k)\le \lambda_{\max}(\hat H_k)\le \tilde c_2,
	\end{equation}
a sequence of matrices $\{ H_k\}$ satisfying (\ref{assT1bis}) is obtained by choosing:
$$
H_k = \left[\begin{array}{cc}
		\|g_k\|(\hat H_{11})_k \|g_k\|& \|g_k\| (\hat H_{12})_k \|s_k\| \\
		\|s_k\|(\hat H_{21})_k \|g_k\|& \|s_k\|(\hat H_{22})_k \|s_k\|
	\end{array}\right].
$$
{We will show later in this work how the condition can practically be enforced in computational scenarios.}
\end{remark}
\section{Algorithmic model}
\label{sec:alg}
In this section we present an algorithmic framework for gradient methods with momentum which exploits the  theoretical analysis carried out in the previous sections. The idea is to define a first-order algorithm with both strong theoretical guarantees in the nonconvex setting and the possibility of computationally exploiting eventual second-order information on the minimization problem. Before formally presenting the algorithm, we summarize the key steps:
\begin{itemize}
\item[(a)] define a quadratic subproblem
\begin{equation}\label{problem-Hk-new_alg}
	\begin{aligned}
		\min_{\alpha, \beta}\;&   \frac{1}{2}\ \begin{bmatrix}
			\alpha\\ 
			\beta
		\end{bmatrix}^TH_k\begin{bmatrix}
			\alpha\\ 
			\beta
		\end{bmatrix} +\begin{bmatrix}
		-\|g_k\|^2\\ 
		g_k^Ts_k
	\end{bmatrix}^T \begin{bmatrix}
			\alpha\\ 
			\beta
		\end{bmatrix}.
	\end{aligned}
\end{equation}
where $H_k$ is a $2\times 2$ symmetric matrix;
\item[(b)] once computed a solution $[\alpha_k \ \beta_k]^T$ of (\ref{problem-Hk-new_alg}), provided one exists, a check on the obtained search direction 
$$
d_k=-\alpha_kg_k+\beta_ks_k
$$
is performed in order to ensure the gradient-related property of the sequence $\{d_k\}$;
\item[(c)] if the test is satisfied, then a standard Armjio-type line search is performed along $d_k$; otherwise, a suitable modification of $H_k$ based on (\ref{assT1bis}) is introduced and, again, steps (a) (with the modified $H_k$) and (b) (without the check on $d_k$) are performed,  as well as the Armjio-type line search  along the obtained $d_k$.
\end{itemize}
\par
\medskip\noindent
The proposed Algorithmic Model is described in Algorithm \ref{alg:qps}. Notice that the initial tentative stepsize $\eta=1$ is optimal according to the quadratic model used to define the search direction $d_k$; thus, the unit step will often be a good step even for the true objective and satisfy the Armijo sufficient decrease condition. From a computational perspective, this allows to possibly save several backtracking steps and, consequently, function evaluations.
\begin{algorithm}[htbp]
	\caption{\texttt{Gradient Method with Momentum (GMM)}}
	\label{alg:qps}
	\algnewcommand{\LineComment}[1]{\State \texttt{\( \slash\ast \) #1 \( \ast\slash \)}}
	\begin{algorithmic}[1]
		\State Input: $x_0\in\mathbb{R}^n$, $\gamma\in(0,1)$, $\delta\in(0,1)$, $c_1>0$, $c_2>0$.
		\State Set $k\leftarrow 0$
		\While{$\|\nabla f(x_k)\|> 0$}
			\LineComment{Compute the search direction}
			\State Set \texttt{\textit{gr\_dir\_found}} $\leftarrow$ \texttt{False}
			\State Define a $2\times 2$ symmetric matrix $H_k$ \label{step:first_H}
			\If{problem \eqref{problem-Hk-new_alg} admits solution $[\alpha_k\;\;\beta_k]^T$}
			\State Set $
				d_k\leftarrow -\alpha_kg_k+\beta_k s_k
				$
				\If{$g_k^Td_k\le  -c_1\|g_k\|^2$ \textbf{and} $\|d_k\|\le  c_2 \|g_k\|$ \label{step:test}}
				\State Set \texttt{\textit{gr\_dir\_found}} $\leftarrow$ \texttt{True}
				\EndIf
				\EndIf	
			
			\If{\texttt{gr\_dir\_found} $=$ False}
			\State Define a new $2\times 2$ symmetric  matrix $H_k$ satisfying condition  \eqref{assT1bis} \label{step:new_H}
			\State Compute $\alpha_k$ and $\beta_k$ by solving \eqref{problem-Hk-new_alg}
				
				\State Set  $d_k\leftarrow -\alpha_kg_k+\beta_k s_k$
				\label{step:new_dir}
			\EndIf 
			
			%
			%
			\LineComment{Perform Armijo line search along $d_k$}
			
			%
			\State Set $\eta \leftarrow 1$
			\While{$f(x_k+\eta d_k) > f(x_k) + \gamma \eta d_k^\top\nabla f(x_k)$}
					\State Set $\eta \leftarrow \delta\eta$
			\EndWhile
			\State Set $\eta_k \leftarrow \eta$,
			%
			%
			$x_{k+1} \leftarrow x_k + \eta_kd_k$
			\State Set $k\leftarrow k+1$
		\EndWhile
		%
	\end{algorithmic}
\end{algorithm}

%

The theoretical properties of the proposed framework for gradient methods with momentum are stated in the following proposition.

\begin{proposition}

	Assume that $\mathcal{L}_0 = \{x\in\Re^n\mid f(x)\leq f(x_0)\}$ is a compact set. 
 Then, Algorithm \ref{alg:qps} either stops in a finite number of iterations $\nu$ producing a point $x_\nu$ which is stationary for $f$, i.e. $\nabla f(x_\nu) = 0$, or it produces an infinite sequence $\{x_k\}$ that admits limit points, each one being a stationary point for $f$. Furthermore, if the gradient $\nabla f$ is Lipschitz continuous on $\Re^n$, we have that Algorithm \ref{alg:qps} requires at most $\mathcal{O}(\eps^{-2})$ iterations, function and gradient evaluations to attain
	$$
	\|\nabla f(x_k)\|\le \epsilon_k.
	$$
\end{proposition}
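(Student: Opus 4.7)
The plan is to verify that the sequence $\{d_k\}$ generated by Algorithm \ref{alg:qps} is gradient-related uniformly in $k$, so that the proposition reduces to a standard global convergence and complexity analysis for Armijo-type methods with gradient-related directions.

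First I would argue that the two branches of the direction computation are each guaranteed to yield a gradient-related direction. On iterations where the test at line \ref{step:test} passes, the direction $d_k$ satisfies the defining inequalities of \cref{def:gr} with constants $c_1, c_2$ directly, by construction of the test. On iterations where that check fails (or the first subproblem \eqref{problem-Hk-new_alg} has no solution), the algorithm falls through to lines \ref{step:new_H}--\ref{step:new_dir}, which construct a new $H_k$ satisfying \eqref{assT1bis}; by \cref{prop:gr_tilde Hk} the resulting $d_k$ then satisfies \eqref{thesis-tilde H1new}--\eqref{thesis-tilde H2new}, that is, the conditions of \cref{def:gr} with constants $1/c_2$ and $2/c_1$ (where $c_1,c_2$ are here reinterpreted as the spectral bounds in \eqref{assT1bis}). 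Taking the minimum and maximum of the corresponding constants over the two branches yields uniform $\tilde c_1,\tilde c_2>0$ valid for the whole sequence $\{d_k\}$.

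Once gradient-relatedness is in hand, the first part of the thesis follows by the classical argument for Armijo line searches applied to gradient-related directions. In the finite-termination case the statement is immediate. Otherwise, compactness of $\mathcal{L}_0$ and monotonicity of $\{f(x_k)\}$ (guaranteed by the Armijo acceptance condition) imply that $\{x_k\}\subset\mathcal{L}_0$, so the sequence admits limit points and $\{f(x_k)\}$ converges. A standard contradiction argument --- assuming a subsequence along which $\|\nabla f(x_k)\|$ is bounded away from zero, using the uniform bounds on $\|d_k\|/\|g_k\|$ and the finite termination of the backtracking along descent directions --- then shows that $\nabla f(x_k)\to 0$, so that every limit point of $\{x_k\}$ is stationary.

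For the complexity bound, I would invoke the Lipschitz continuity of $\nabla f$ to derive a uniform lower bound $\bar\eta>0$ on the accepted stepsize $\eta_k$. Concretely, by the descent lemma together with $g_k^T d_k\le -\tilde c_1\|g_k\|^2$ and $\|d_k\|\le \tilde c_2\|g_k\|$, any $\eta\le 2(1-\gamma)\tilde c_1/(L\tilde c_2^{\,2})$ already satisfies the Armijo inequality, so the backtracking rule terminates with
$$
\eta_k\ge \bar\eta := \min\!\left\{1,\ \tfrac{2\delta(1-\gamma)\tilde c_1}{L\tilde c_2^{\,2}}\right\},
$$
and the number of backtracking steps per iteration is bounded by a constant $N_b$ depending only on $\delta,\gamma,L,\tilde c_1,\tilde c_2$. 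The Armijo decrease then gives $f(x_k)-f(x_{k+1})\ge \gamma\bar\eta\,\tilde c_1\|g_k\|^2$, and if $\|\nabla f(x_j)\|>\epsilon$ for all $j<N$, summing over $j$ yields
$$
f(x_0)-\inf_{\mathcal{L}_0} f\ \ge\ N\gamma\bar\eta\,\tilde c_1\epsilon^2,
$$
whence $N=\mathcal{O}(\epsilon^{-2})$. Each iteration costs one gradient evaluation and at most $N_b$ function evaluations, giving the same asymptotic bound on function and gradient evaluations. The main obstacle is really bookkeeping: unifying the gradient-related constants across the two branches of the direction computation, and extracting the explicit lower bound on $\eta_k$ from the descent lemma in order to upgrade the iteration count to a function- and gradient-evaluation count.
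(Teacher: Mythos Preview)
Your proposal is correct and follows essentially the same approach as the paper: establish that $\{d_k\}$ is gradient-related (by the explicit test at step~\ref{step:test} in one branch and by \cref{prop:gr_tilde Hk} in the other), then reduce to standard convergence and complexity results for Armijo methods with gradient-related directions. The only difference is that the paper dispatches the second part by citing \cite[Proposition 1.2.1]{bertsekas1999nonlinear} and \cite{cartis2015worst}, whereas you spell out the descent-lemma and telescoping arguments explicitly.
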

\begin{proof}
From the steps the algorithm and Proposition \ref{prop:gr_tilde Hk} we have that
 $\{d_k\}$ is a sequence of gradient-related directions. Since the Armijo line search is employed within Algorithm \ref{alg:qps}, the results follow from \cite[Proposition 1.2.1]{bertsekas1999nonlinear} and by \cite{cartis2015worst}.
\end{proof}

We now focus on the issue of modifying a given
$2\times 2$ matrix $H_k$ in order to satisfy condition
\eqref{assT1bis} as required at step \ref{step:new_H} of Algorithm \ref{alg:qps}.
Suppose that a symmetric $H_k$ matrix has been defined at step \ref{step:first_H}, but either problem \eqref{problem-Hk-new_alg} does not admit solution or the test at step \ref{step:test} is not satisfied, i.e., the value of \texttt{\textit{gr\_dir\_found}} remains False. Then, step \ref{step:new_H}  must be performed and a new matrix $H_k$ must be constructed modifying as least as possible the given matrix
defined at step \ref{step:first_H}.
 
Let us denote by $H_k^0$ the matrix defined at step \ref{step:first_H} and by $H_k$ the new matrix defined at step \ref{step:new_H}.
We can proceed as follows:
\begin{itemize}
\item[-] A matrix $\hat H_k$ can be obtained by a modified Cholesky factorization \cite{bertsekas1999nonlinear} applied to $H_k^0$ 
\item[-] Letting $$
D_k=\begin{bmatrix}
	   \|g_k\|& 0\\ 
	   0& \|s_k\|
    \end{bmatrix}
 $$
we can set $H_k=D_k\hat H_k D_k$,
i.e.,
$$
H_k = \left[\begin{array}{cc}
		\|g_k\|(\hat H_{11})_k \|g_k\|& \|g_k\| (\hat H_{12})_k \|s_k\| \\
		\|s_k\|(\hat H_{21})_k \|g_k\|& \|s_k\|(\hat H_{22})_k \|s_k\|
	\end{array}\right].
$$
\end{itemize}
The boundedness of $\{H_k^0\}$ and the properties of the 
modified Cholesky factorization imply that
\eqref{assT1tris} of 
Remark \ref{remarknew} holds,  so that, according to the same remark, the matrix $H_k$ is such that
condition
\eqref{assT1bis} is satisfied.
\par
\smallskip
We conclude this section by stating a theoretical result showing that,
under suitable assumptions on the objective function and a choice of the matrix $H_k$ actually related to the Hessian $\nabla^2f(x_k)$,
steps \ref{step:new_H}-\ref{step:new_dir} are never executed for $k$ sufficiently large, that is,
the test at step \ref{step:test} is always satisfied.

\begin{proposition}
Let $f:\Re^n\to \Re$ be a twice continuously differentiable function.
Suppose that
$$
H_k=
\begin{bmatrix}
	g_k^T\nabla^2f(x_k)g_k& -g_k^T\nabla^2f(x_k)s_k\\ 
	-g_k^T\nabla^2f(x_k)s_k& s_k^T\nabla^2f(x_k)s_k
\end{bmatrix}
$$
is the matrix defined at step \ref{step:first_H},  and let $\{x_k\}$ be the sequence generated by Algorithm \ref{alg:qps}.
Assume that $\{x_k\}$ converges to $x^*$, where $\nabla f(x^*)=0$ and the Hessian matrix $\nabla^2f(x^*)$ is positive definite.
Furthermore, assume that the constants of the test at step \ref{step:test} are chosen in a such a way that:
 \begin{equation*}\label{NWtipe} c_1\le \theta^3\frac{\lambda_{\min}(\nabla^2 f(x^*))}{\lambda_{\max}(\nabla^2 f(x^*))^2},\qquad\qquad c_2\ge \frac{2}{\theta\lambda_{\min}(\nabla^2 f(x^*))}. \end{equation*}
 where $\theta\in (0,1)$.
 Then, for $k$ sufficiently large the test at step \ref{step:test} is satisfied.  
\begin{proof}
By continuity there exists a neighborhood ${\cal B}(x^*)$ of $x^*$ such that if $x_k\in{\cal B}(x^*) $ we have:
$$\theta\lambda_{\min}(\nabla^2 f(x^*))\le \lambda_{\min}(\nabla^2 f(x_k)),\qquad \lambda_{\max}(\nabla^2 f(x_k))\le \frac{1}{\theta}\lambda_{\max}(\nabla^2 f(x^*)).$$
Then the  thesis follows from Proposition \ref{teorembk} by setting $$B_k=\nabla^2 f(x_k), \quad\eta_1=\theta\lambda_{\min}(\nabla^2 f(x_k)), \quad \eta_2=\frac{1}{\theta}\lambda_{\max}(\nabla^2 f(x_k))$$
and the proof is complete.
\end{proof}
\end{proposition}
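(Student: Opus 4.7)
The plan is to invoke Proposition \ref{teorembk} with the choice $B_k = \nabla^2 f(x_k)$. With this identification, the matrix $H_k$ in the statement is precisely $P_k^T B_k P_k$ in the notation of \eqref{eq:Hk_explicit}, so the two-dimensional subproblem \eqref{problem-Hk-new_alg} coincides with the instance of \eqref{eq:constrained_dir_prob2} already covered by that result. What remains is to produce a pair of uniform eigenvalue bounds $\eta_1,\eta_2$ for $B_k$ valid for all sufficiently large $k$.

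First I would use continuity of $\nabla^2 f$ together with $x_k \to x^*$ and the continuous dependence of the eigenvalues of a symmetric matrix on its entries. Since $\nabla^2 f(x^*)$ is positive definite, for the prescribed $\theta\in(0,1)$ there exists an index $k_0$ such that for every $k\ge k_0$
\begin{equation*}
  \theta\,\lambda_{\min}(\nabla^2 f(x^*)) \le \lambda_{\min}(\nabla^2 f(x_k)) \le \lambda_{\max}(\nabla^2 f(x_k)) \le \tfrac{1}{\theta}\lambda_{\max}(\nabla^2 f(x^*)).
\end{equation*}
In particular $\nabla^2 f(x_k)$ is symmetric positive definite on this tail; together with $g_k\ne 0$ (the algorithm has not terminated) this secures the existence hypotheses of Proposition \ref{prop:existence_B_new}, so the subproblem admits a solution and Proposition \ref{teorembk} is applicable.

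Applying Proposition \ref{teorembk} with the uniform bounds $\eta_1:=\theta\,\lambda_{\min}(\nabla^2 f(x^*))$ and $\eta_2:=\lambda_{\max}(\nabla^2 f(x^*))/\theta$ then yields
\begin{equation*}
  g_k^T d_k \le -\frac{\eta_1}{\eta_2^2}\|g_k\|^2 = -\theta^3\,\frac{\lambda_{\min}(\nabla^2 f(x^*))}{\lambda_{\max}(\nabla^2 f(x^*))^2}\|g_k\|^2, \qquad \|d_k\| \le \frac{2}{\eta_1}\|g_k\| = \frac{2}{\theta\,\lambda_{\min}(\nabla^2 f(x^*))}\|g_k\|.
\end{equation*}
The assumed upper bound on $c_1$ and lower bound on $c_2$ translate these inequalities exactly into $g_k^T d_k \le -c_1\|g_k\|^2$ and $\|d_k\| \le c_2\|g_k\|$, which is the test at step \ref{step:test}; hence the test succeeds for every $k\ge k_0$.

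The argument is essentially a continuity reduction to the $n\times n$ analysis of Section \ref{sec:nn}, so the only mildly delicate point I would anticipate is the degenerate case $s_k=0$, which is excluded by the hypothesis of Proposition \ref{prop:existence_B_new}. This is not a substantive obstruction: in that case the second row and column of $H_k$ vanish, the subproblem collapses to the one-dimensional minimization $\min_\alpha \tfrac{1}{2}\alpha^2 g_k^T\nabla^2 f(x_k)g_k - \alpha\|g_k\|^2$, and the same spectral bounds on $\nabla^2 f(x_k)$ give the gradient-related inequalities directly through Rayleigh-quotient estimates, with constants at least as good as in the non-degenerate case.
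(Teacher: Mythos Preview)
Your proof is correct and follows essentially the same route as the paper: a continuity argument to obtain uniform spectral bounds on $\nabla^2 f(x_k)$ for large $k$, followed by an application of Proposition~\ref{teorembk} with $B_k=\nabla^2 f(x_k)$. You are in fact slightly more careful than the paper, which writes $\eta_1=\theta\lambda_{\min}(\nabla^2 f(x_k))$ and $\eta_2=\frac{1}{\theta}\lambda_{\max}(\nabla^2 f(x_k))$ (with $x_k$ rather than $x^*$), and does not comment on the degenerate case $s_k=0$.
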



\section{Computation of ${H_k}$}\label{sec:comp_H}

The core of the general framework lies in
how a sequence of $2\times 2$ matrices $\{H_k\}$ can be determined to ensure an efficient computational behavior of the algorithm, as well as exploiting \cref{prop:gr_B} or \cref{prop:gr_tilde Hk} to guarantee sound theoretical properties.\par\noindent
Regarding the first issue, the following two strategies can be adopted: 
\begin{itemize}
\item[i)]  define a suitable $n\times n$ matrix $B_k$  and compute the matrix $H_k$ by using (\ref{eq:Hk_explicit});
\item[ii)]  define a suitable $2\times 2$ matrix $H_k$ independent on any $n\times n$ matrix.
\end{itemize}
\par\smallskip
Concerning strategy i), in large-scale optimization problems the use and storage of the $n\times n$ matrix $B_k$ can be computationally too expensive, if not downright prohibitive. Therefore, to take into account this issue, we propose two approaches described in \cref{sec:Hd} and \cref{sec:diag}. A technique related to strategy ii) is presented in \cref{sec:interp}.
Summarizing, we propose three techniques to compute the matrix $H_k$, although
other approaches could be exploited within the general framework
we have presented.
\subsection{Approximating Hessian-vector products  by finite differences of gradients}
\label{sec:Hd}
In line with the strategy employed in \cite{zhang2022drsom,tang2024riemannian}, we can draw inspiration by the strategy employed in Truncated Newton methods  \cite{grippo1989truncated,dembo1982inexact}, where the explicit management of the Hessian matrix $\nabla^2 f(x_k)$ is not required, but rather the Hessian-vector product $\nabla^2 f(x_k)d$ is directly handled, with $d\in R^n$.

Consider $H_k$ defined by \eqref{eq:Hk_explicit}, i.e.,
\[
	H_k = \begin{bmatrix}
		g_k^TB_kg_k& -g_k^TB_ks_k\\ 
		-g_k^TB_ks_k& s_k^TB_ks_k
	\end{bmatrix}.
\]
The elements of $H_k$ can be obtained estimating the
two matrix-vector products $B_kg_k$ and $B_ks_k$ by finite difference approximation, namely by the following vectors (where $\xi > 0$ is a suitably small parameter):
\begin{eqnarray*}
\frac{\nabla f(x_k+\xi g_k/\|g_k\|) - g_k}{\xi/\|g_k\|} & \approx & \nabla^2 f(x_k)g_k,\\
\frac{\nabla f(x_k+\xi s_k/\|s_k\|) - g_k}{\xi/\|s_k\|} & \approx& \nabla^2 f(x_k)s_k.
\end{eqnarray*}
In this way  it is possible to consistently construct $H_k$ without the need of handling an $n\times n$ matrix. The price to pay consists in two additional evaluations of the $n$ dimensional gradient of $f$, $\nabla f(x)$.


\subsection{Hessian estimation in subspace by interpolation}\label{sec:interp}

In this subsection,
an alternative way to compute the matrix $H_k$ is proposed that avoids the need of additional $n$ dimensional gradient evaluations. We note that a simplified version of the following approach was found to be particularly well-performing in \cite{zhang2022drsom}.
\par
Let us consider
the two-dimensional function  
$$
\psi_k(\alpha,\beta)=f(x_k-\alpha g_k+\beta s_k).
$$
Assuming that $f$ is twice continuously differentiable,
we have seen --- see \eqref{QuadModel} --- that
the quadratic Taylor polynomial of $\psi_k(\alpha,\beta)$ centered at $(0,0)$ is
\begin{equation*}\label{QuadModel_2}
\psi_k(\alpha,\beta) = f(x_k)+\begin{bmatrix}
		-\|g_k\|^2\\ 
		g_k^Ts_k
	\end{bmatrix}^T \begin{bmatrix}
			\alpha\\ 
			\beta
		\end{bmatrix}
		+\frac{1}{2}
		\begin{bmatrix}
			\alpha\\ 
			\beta
		\end{bmatrix}^T
		\begin{bmatrix}
	g_k^T\nabla^2f(x_k)g_k& -g_k^T\nabla^2f(x_k)s_k\\ 
	-g_k^T\nabla^2f(x_k)s_k& s_k^T\nabla^2f(x_k)s_k
\end{bmatrix}
\begin{bmatrix}
			\alpha\\ 
			\beta
		\end{bmatrix}.
\end{equation*}
Then, the matrix $H_k$ defined by (\ref{eq:Hk_explicit}) can be
viewed as a matrix approximating $\nabla^2\psi_k(0,0)$, and, in a more general setting,
$H_k$ could be any $2\times 2$ matrix (independent of the $n\times n$ matrix $B_k$).
This leads to consider the following approximation of the quadratic Taylor polynomial of $\psi_k(\alpha,\beta)=f(x_k-\alpha g_k+\beta s_k)$:
\begin{equation*}\label{QuadModel2_2}
\phi(\alpha,\beta)=
f(x_k)+\begin{bmatrix}
		-\|g_k\|^2\\ 
		g_k^Ts_k
	\end{bmatrix}^T \begin{bmatrix}
			\alpha\\ 
			\beta
		\end{bmatrix}
		+\frac{1}{2}
		\begin{bmatrix}
			\alpha\\ 
			\beta
		\end{bmatrix}^T
\left[\begin{array}{cc}
		H_{11} & H_{12}  \\
		H_{12} & H_{22} 
	\end{array}\right]		
\begin{bmatrix}
			\alpha\\ 
			\beta
		\end{bmatrix}.
\end{equation*}
The three elements defining a symmetric matrix $H_k$ can be determined by imposing the interpolation conditions on
three points  $(\alpha_1,\beta_1)$, $(\alpha_2,\beta_2)$, and $(\alpha_3,\beta_3)$ different from $(0,0)$:
\begin{gather*}
	\phi(\alpha_1,\beta_1)=\psi_k(\alpha_1,\beta_1)=f(x_k-\alpha_1g_k+\beta_1s_k),\\
	\phi(\alpha_2,\beta_2)=\psi_k(\alpha_2,\beta_2)=f(x_k-\alpha_2g_k+\beta_2s_k),\\
	\phi(\alpha_3,\beta_3)= \psi_k(\alpha_3,\beta_3)=f(x_k-\alpha_3g_k+\beta_3s_k),
\end{gather*}
i.e.,
by solving the linear system
\begin{eqnarray*}
	\label{eq:lin_sys_1bbb}
	&&\lefteqn{\frac{1}{2}\left(\!\!\begin{array}
		{ccc}		\alpha_1^2&2\alpha_1\beta_1&\beta_1^2\\\alpha_2^2&2\alpha_2\beta_2&\beta_2^2\\\alpha_3^2&2\alpha_3\beta_3&\beta_3^2
	\end{array}\!\!\right)\!\!\left(\!\!\begin{array}{c}
		H_{11}\\H_{12}\\H_{22}
	\end{array}\!\!\right) =} \\ &&\qquad\qquad \left(\!\!\begin{array}{c}
		f(x_k-\alpha_{1}g_k + \beta_{1}s_k) - f(x_k) +\alpha_{1}\|g_k\|^2 - \beta_{1}g_k^Ts_k\\
		f(x_k-\alpha_{2}g_k + \beta_{2}s_k) - f(x_k) + \alpha_{2}\|g_k\|^2 - \beta_{2}g_k^Ts_k\\
		f(x_k-\alpha_{3}g_k + \beta_{3}s_k) - f(x_k) + \alpha_{3}\|g_k\|^2 - \beta_{3}g_k^Ts_k
	\end{array}\!\!\right).
\end{eqnarray*}

\begin{remark}
The described strategy requires three function evaluations. However, considering the point $(\alpha_1,\beta_1)=(0,-1)$, we have
$$
f(x_k-\alpha_{1}g_k + \beta_{1}s_k)=
f(x_k-(x_k-x_{k-1})=f(x_{k-1}).
$$
Then, by exploiting the information of the past iteration, i.e., the knowledge of $f(x_{k-1})$, the matrix $H_k$ can be built by only two additional function evaluations.

	Two reasonable candidate points to evaluate the function at might be, for example, $(\alpha_{k-1},\beta_{k-1})$ and $(\alpha_{k-1},0)$. 
The interpolation system to obtain the quadratic matrix $H_k$ becomes
	\begin{eqnarray*}
		&&\lefteqn{\frac{1}{2}\left(\begin{array}
			{ccc}
			0&0&1\\\alpha_{k-1}^2&0&0\\\alpha_{k-1}^2&2\alpha_{k-1}\beta_{k-1}&\beta_{k-1}^2
		\end{array}\right)\left(\begin{array}{c}
			H_{11}\\H_{12}\\H_{22}
		\end{array}\right) =} \\ &&\qquad\qquad\left(\begin{array}{c}
			f(x_{k-1})-f(x_k)+g_k^Ts_k\\
			f(x_k-\alpha_{k-1}g_k) - f(x^k) + \alpha_{k-1}\|g_k\|^2\\
			f(x_k-\alpha_{k-1}g_k + \beta_{k-1}s_k) - f(x_k) + \alpha_{k-1}\|g_k\|^2 - \beta_{k-1}g_k^Ts_k
		\end{array}\right).
	\end{eqnarray*}
\end{remark}
\subsection{ Hessian approximation  by a diagonal matrix}\label{sec:diag}

The last example of computation of the matrix $H_k$ does not require any additional function or gradient  computations. The idea is to  consider a diagonal matrix $B_k$, that is
\begin{equation}\label{Bksimmetrica} B_k=\begin{bmatrix}
		(\mu_k)_1 & 0 &\cdots &0  \\ 
		0 & (\mu_k)_2 &\cdots& 0 \\
         \vdots & \vdots &\ddots&\vdots \\
         0 & \cdots &\cdots&(\mu_k)_n 
	\end{bmatrix}.\end{equation} 
The diagonal elements of the matrix $B_k$ are computed  by drawing inspiration from the  approach of Barzilai-Borwein methods \cite{raydan1997barzilai}. The matrix $B_k$ given by \eqref{Bksimmetrica} is the optimal solution of the following problem:
$$\min_B \|B_k s_k -y_k\|^2$$
where
\begin{gather*}
y_k  =  g_k - g_{k-1},\qquad
s_k =  x_k - x_{k-1}.
\end{gather*}
This implies that, for $i=1,\ldots,n$:
$$(\mu_k)_i =(y_k)_i/(s_k)_i.$$


\noindent
Finally, the elements of the $2\times 2$ matrix $H_k$ are given by:
\begin{gather*}
	(H_{11})_k  = \sum_{i=1}^n (\mu_k)_i (g_k)_i^2, \quad
	(H_{12})_k  = \sum_{i=1}^n (\mu_k)_i (g_k)_i(s_k)_i, \quad
	(H_{22})_k  = \sum_{i=1}^n (\mu_k)_i (s_k)_i^2.
\end{gather*}
\section{Computational experiments}
\label{sec:exp}
In this section, we describe and report the results of thorough computational experiments aimed at assessing the potential of the algorithm proposed in this work.


The code for all the experiments described in this work was written in Python 3.11, exploiting \texttt{numpy} and \texttt{scipy} libraries. The software is available at:
\href{https://github.com/gliuzzi/GMM}{https://github.com/gliuzzi/GMM}.

Regarding Algorithm \ref{alg:qps}, it has been implemented choosing matrix $H_k$ according to the rule discussed in \cref{sec:interp}, which provided better results than those from \cref{sec:Hd} and \cref{sec:diag} in preliminary experiments; \color{black} we employed the safeguarding technique based on the modified Cholesky factorization, described in \cref{sec:alg}, to ensure that Assumption \cref{assT1bis} holds. 
As for the line search, we set $\delta = 0.5, \; \gamma=10^{-5}$. We also allowed a slight degree of non-monotoniticy in the line search, based on the rule proposed in \cite{zhang2004nonmonotone}. The point $x_{-1}$ is set equal to $x_0$, so that the momentum term is null at the first iteration.

As a baseline for comparisons, we took into account classical nonlinear conjugate gradient methods \cite{hager2006survey} and the L-BFGS algorithm \cite{liu1989limited}, {the latter arguably being the most popular solver} for the solution of smooth unconstrained nonlinear optimization problems. For both algorithms, we considered the very popular implementations available through the \texttt{scipy} library. 

{We also took into account the state-of-the-art \texttt{cg\_descent} procedure available at \href{https://people.clas.ufl.edu/hager/software/}{https://people.clas.ufl.edu/hager/software/} and based on \cite{hager2005new,hager2006algorithm,hager2006survey,hager2013limited}. The \texttt{cg\_descent} provides a highly efficient implementation of a conjugate-gradient type method with subspace optimization steps. We used the python wrapper of the original C code of \texttt{cg\_descent} v.\ 6.8, available at \href{https://github.com/alexfikl/pycgdescent}{https://github.com/alexfikl/pycgdescent}, so that we could employ the procedure within the same testing environment as the other considered methods.}

The results of the experiments are shown in the manuscript in the form of performance profiles \cite{dolan2002benchmarking}. {The detailed tables with all the result can be accessed in the code repository \href{https://github.com/gliuzzi/GMM}{https://github.com/gliuzzi/GMM}.}

We considered a benchmark of 77 unconstrained problems from the CUTEst test-suite \cite{cutest} with a number of variables $n\ge 1000$. In Table \ref{tab:problems} these 77 problems are listed along with their dimension; note that, for problems with a user definable number of variables we always choose the default value. \color{black}
\begin{table}[htbp]    
\caption{Collection of 77 unconstrained CUTEst problems ($n$ denotes the number of variables). }
    \label{tab:problems}
\centering
    \begin{tabular}{|l|r|l|r|l|r|}\hline
    Problem & $n$ & Problem & $n$ & Problem & $n$ \\\hline
 ARWHEAD	&	5000	&	DIXMAANM1	&	3000	&	MSQRTBLS	&	1024	\\
BDQRTIC	&	5000	&	DIXMAANN	&	3000	&	NCB20	&	5010	\\
BOX	&	10000	&	DIXMAANO	&	3000	&	NCB20B	&	5000	\\
BOXPOWER	&	20000	&	DIXMAANP	&	3000	&	NONCVXU2	&	5000	\\
BROYDN3DLS	&	5000	&	DIXON3DQ	&	10000	&	NONDIA	&	5000	\\
BROYDN7D	&	5000	&	DQDRTIC	&	5000	&	NONDQUAR	&	5000	\\
BROYDNBDLS	&	5000	&	DQRTIC	&	5000	&	PENALTY1	&	1000	\\
BRYBND	&	5000	&	EDENSCH	&	2000	&	POWELLSG	&	5000	\\
CHAINWOO	&	4000	&	EG2	&	1000	&	POWER	&	10000	\\
COSINE	&	10000	&	EIGENALS	&	2550	&	QUARTC	&	5000	\\
CRAGGLVY	&	5000	&	EIGENBLS	&	2550	&	SCHMVETT	&	5000	\\
CURLY10	&	10000	&	EIGENCLS	&	2652	&	SINQUAD	&	5000	\\
CURLY20	&	10000	&	ENGVAL1	&	5000	&	SINQUAD2	&	5000	\\
CURLY30	&	10000	&	EXTROSNB	&	1000	&	SPARSINE	&	5000	\\
DIXMAANA1	&	3000	&	FLETBV3M	&	5000	&	SPARSQUR	&	10000	\\
DIXMAANB	&	3000	&	FLETCBV2	&	5000	&	SPINLS	&	1327	\\
DIXMAANC	&	3000	&	FLETCHCR	&	1000	&	SPMSRTLS	&	4999	\\
DIXMAAND	&	3000	&	FMINSRF2	&	5625	&	SROSENBR	&	5000	\\
DIXMAANE1	&	3000	&	FMINSURF	&	5625	&	SSBRYBND	&	5000	\\
DIXMAANF	&	3000	&	FREUROTH	&	5000	&	SSCOSINE	&	5000	\\
DIXMAANG	&	3000	&	GENHUMPS	&	5000	&	TOINTGSS	&	5000	\\
DIXMAANH	&	3000	&	JIMACK	&	3549	&	TQUARTIC	&	5000	\\
DIXMAANI1	&	3000	&	KSSLS	&	1000	&	TRIDIA	&	5000	\\
DIXMAANJ	&	3000	&	LIARWHD	&	5000	&	VAREIGVL	&	5000	\\
DIXMAANK	&	3000	&	MOREBV	&	5000	&	WOODS	&	4000	\\
DIXMAANL	&	3000	&	MSQRTALS	&	1024	&		&		
	\\\hline
    \end{tabular}
\end{table}





As stopping condition for our algorithm, as well as all other methods considered in the experimentation, we required  $\|\nabla f(x_k)\|_\infty\le 10^{-6}$;  we also set a maximum number of iterations to 1000000, so that we consider a failure each run that ends by hitting this threshold.

As performance metrics, we considered both the runtime and the number of iterations.

The results of our experiments are shown in Figure \ref{fig:pp_hager}. 
As we can see, \texttt{cg\_descent} procedure appears by far superior than all the other solvers on the considered benchmark of problems. On the other hand, GMM outperforms both L-BFGS and scipy's CG method.

The superior depth and quality of implementation of the \texttt{cg\_descent} procedure w.r.t.\ the simple python code implementing the GMM is probably a component that makes the gap so particularly pronounced; on the other hand, although clearly outperformed, GMM still retains a certain degree of comparability with the state-of-the-art method. This consideration makes us believe that the integration of preconditioning or subspace minimization steps in GMM might lead to an easily accessible Python code with a comparable efficiency w.r.t.\ \texttt{cg\_descent}.

\color{black}

\begin{figure}[htbp]
    \centering
    \begin{minipage}{0.45\textwidth}
        \centering
        \includegraphics[width=\linewidth]{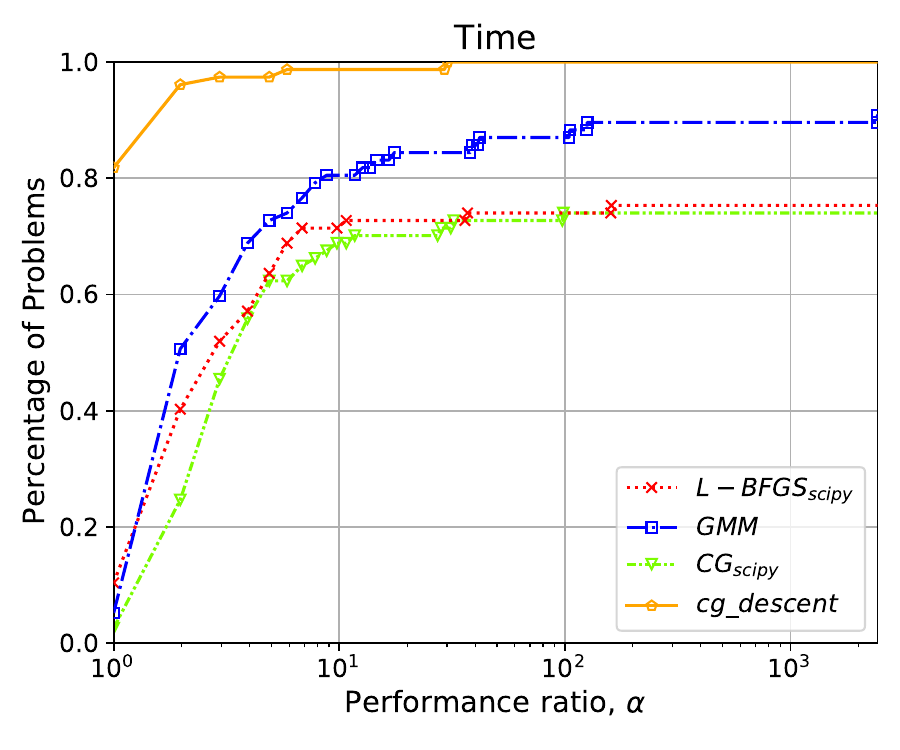}
    \end{minipage}
    \hfil
    \begin{minipage}{0.45\textwidth}
        \centering
        \includegraphics[width=\linewidth]{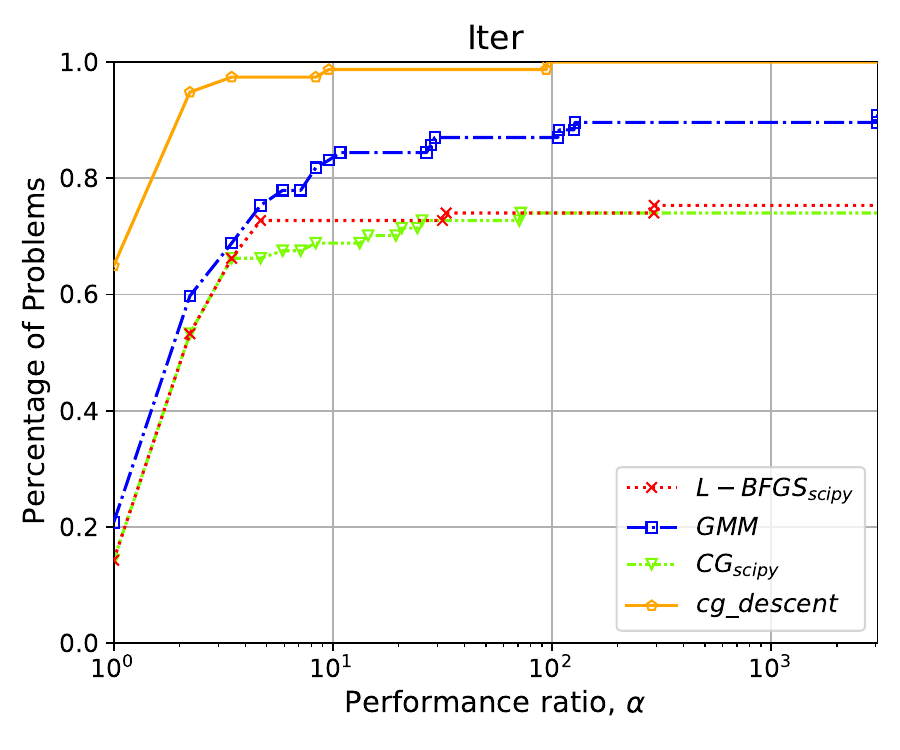}
    \end{minipage}
    \caption{Performance profiles for the comparison between GMM, \texttt{cg\_descent} and the Scipy's implementations of the conjugate gradient method and L-BFGS, on the 77 problems considered.}
    \label{fig:pp_hager}
\end{figure}

\section{Conclusions}
\label{sec:conclusions}
In this work, we introduced a general framework of gradient methods with momentum for nonconvex optimization. For the proposed class of algorithms, we proved global convergence and optimal worst-case complexity bound. The assumptions required to obtain the theoretical results are lighter to check, from a computational perspective, than those required in related works from the literature.  This result allowed to devise particularly efficient ways of implementing the proposed method. By thorough computational results we showed that a rather simple implementation of the novel algorithm outperforms simpler versions of conjugate gradient methods, is competitive with L-BFGS, and is somewhat comparable to state-of-the-art solvers for nonlinear optimization problems, especially in particular classes of problems, such as multi-layer perceptron training tasks. 

These positive preliminary feedback on our procedure motivates future studies aimed at the further improvement of the performance of our procedure. In particular, the integration of some limited-memory and subspace optimization subroutines in \texttt{cg\_descent} within the GMM framework might be subject of future studies. 
\color{black}


\section*{Declarations}

\subsection*{Funding}

No funding was received for conducting this study.

\subsection*{Competing interests}

The authors have no competing interests to declare that are relevant to the content of this article.

\subsection*{Data Availability Statement}
Data sharing is not applicable to this article as no new data were created or analyzed in this study.

\subsection*{Code Availability Statement}
The code developed for the experimental part of this paper is publicly available at \href{https://github.com/gliuzzi/GMM}{\tt https://github.com/gliuzzi/GMM}.

\subsection*{Acknowledgments}
The authors are very grateful to Prof.\ Luigi Grippo for the useful discussions at the early stages of this work. We are also extremely thankful to the Editor-in-Chief and the anonymous referees whose very stimulating comments allowed us to improve the soundness of our work.


\end{document}